\documentclass[a4paper, 11pt, reqno]{amsart}
\usepackage[usenames,dvipsnames]{color}
\usepackage{amssymb, amsmath, latexsym, graphics, graphicx}
\usepackage{ytableau}
\usepackage{mathrsfs}

\newtheorem{theorem}{Theorem}[section]
\newtheorem{lemma}[theorem]{Lemma}
\newtheorem{corollary}[theorem]{Corollary}

\newtheorem{question}[theorem]{Question}

\newtheorem{conjecture}[theorem]{Conjecture}

\theoremstyle{definition}
\newtheorem{example}[theorem]{Example}
\newtheorem{remark}[theorem]{Remark}

\allowdisplaybreaks

\makeatother


\newcommand\trop{\mathbb{T}}
\newcommand{\plac}[1]{\mathbb{P}_#1}

\newcommand{\twon}{{2^{[n]}}}
\newcommand{\utst}{M_{\twon}(\trop)}
\newcommand{\bark}{{\hat{[k]}}}
\numberwithin{equation}{section}

\begin{document}
\title[Tropical Representations and Identities of Plactic Monoids]{Tropical Representations and Identities of Plactic Monoids}

\date{\today}
\keywords{plactic monoids, tropical semiring, upper triangular matrix semigroups, identities, varieties, representation}
\thanks{}
\subjclass[2010]{20M07, 20M30, 05E99, 12K10, 16Y60}

\maketitle

\begin{center}

MARIANNE JOHNSON\footnote{Email \texttt{Marianne.Johnson@manchester.ac.uk}} and
MARK KAMBITES\footnote{Email \texttt{Mark.Kambites@manchester.ac.uk}}.

\medskip

Department of Mathematics, University of Manchester, \\
Manchester M13 9PL, UK.

\end{center}

\begin{abstract} We exhibit a faithful representation of the plactic monoid of every finite rank as a monoid of upper triangular matrices over the tropical semiring. This answers a question first posed by Izhakian and subsequently studied by several authors. A consequence is a proof of a conjecture of Kubat and Okni\'{n}ski that every plactic monoid of finite rank satisfies a non-trivial semigroup identity. In the converse direction, we show that every
identity satisfied by the plactic monoid of rank $n$ is satisfied by the monoid of $n \times n$ upper triangular tropical matrices. In particular this implies that the variety generated by the $3 \times 3$ upper triangular tropical matrices coincides with that generated by the plactic monoid of rank $3$, answering another question
of Izhakian.
\end{abstract}

An important family of monoids, which have attracted much attention due to their interesting combinatorics and applications in representation theory, are the \textit{plactic monoids}. These monoids arise from the combinatorics of tableaux by identifying words over a fixed ordered alphabet whenever they produce the same tableau via Schensted's insertion algorithm \cite{Schensted}. Knuth \cite{Knuth} gave a presentation of the plactic monoids in terms of certain balanced relations of length 3, and these monoids were later studied in detail by Lascoux and Sch\"{u}tzenberger \cite{Lascoux81}. The plactic monoids have applications in algebraic combinatorics and representation theory (due to the important role played by Young tableaux in the representation of the symmetric and linear groups); they have been used to prove the Littlewood-Richardson rule \cite{Schutzenberger77} (a combinatorial rule in the theory of symmetric functions describing the product of Schur functions, or equivalently, in representation theory describing certain tensor products of irreducible representations of unitary groups) and to provide a combinatorial description of the Kostka-Foulkes polynomials \cite{Lascoux78} (which arise as entries of the character tables of finite linear groups).   Recent research has also focused on the monoid algebras of the plactic monoids: in the rank 3 case, Kubat and Okni\'{n}ski described the prime ideals and irreducible representations \cite{Kubat12} and constructed a finite Gr\"{o}bner-Shorshov basis \cite{Kubat14} for each plactic algebra. In the rank 4 case, Ced\'{o}, Kubat and Okni\'{n}ski studied the irreducible representations.  Cain, Gray and Malheiro \cite{Gray15} constructed a finite complete rewriting system for each plactic monoid and used this to prove that every plactic algebra of finite rank admits a finite Gr\"{o}bner-Shirshov basis, and that each plactic monoid of finite rank is biautomatic; they recently generalised these results to other \emph{crystal monoids} \cite{Gray19} (another description of the plactic monoids being in terms of \emph{crystal bases} in the sense of  Kashiwara \cite{Kashiwara}).  

The \textit{tropical semiring} (also known as the \textit{max-plus semiring}) is widely studied due its applications in numerous areas such as algebraic 
geometry and combinatorial optimisation (see just for example \cite{Akian, Allamigeon, Butkovic, Draisma12, GG16, MaclaganSturmfels, Mikhalkin}). The structure of tropical matrices plays a natural role in many of these applications, and has recently been the subject of extensive study (for example \cite{Merlet15, HK12, IJK16, JK13, Shitov14}). From an algebraic perspective, the tropical semiring is of interest as a natural carrier for representations of semigroups, including important infinite semigroups which do
not admit faithful finite dimensional representations over fields. Perhaps the best example is the \textit{bicyclic monoid}, which is ubiquitous
in infinite semigroup theory, appearing as a submonoid of numerous important semigroups. This monoid admits no faithful finite dimensional representation over any field, posing an obstruction to representing the many semigroups containing it, but it does have faithful representations over the tropical semiring \cite{Daviaud18,Izhakian10}.

Izhakian \cite[Theorem 7.17]{Izhakian19} first showed that the plactic monoid of rank $3$ admits a faithful representation by $6 \times 6$ upper triangular tropical matrices. A completely different description of what turns out to be essentially the same representation (see Remarks~\ref{remark_izhakiancain} and \ref{remark_bigblockcain} below) was given by Cain \textit{et al} \cite{Cain17}. This representation admits natural analogues in higher rank, but these are not faithful (see \cite[example before
Theorem 7.18]{Izhakian19} and Remark~\ref{remark_cainnotfaithful} below). Izhakian \cite[Problem 8.1]{Izhakian19} asked whether plactic
monoids of rank $4$ and above admit faithful representations by tropical matrices (upper triangular or otherwise); this question was also remarked upon in \cite{Okninski19}. One of the main aims of this paper is to answer this question in the positive. We shall exhibit an explicit faithful representation of the plactic monoid of every rank within a \textit{chain-structured tropical matrix semigroup}; these semigroups, which were introduced in \cite{Daviaud18}, are in particular examples of upper triangular tropical matrix semigroups.

A related question is that of whether the plactic monoids satisfy non-trivial semigroup identities. Kubat and Okni\'{n}ski \cite[Theorem 2.6]{Kubat15} showed that the plactic monoid of rank $3$ satisfies a non-trivial semigroup identity, and conjectured that the
same is true for every plactic monoid of finite rank. Cain and Malheiro \cite{Cain16} answered the corresponding
question in several classes of monoids related to plactic monoids and, in the light of this new evidence, repeated the original conjecture
about plactic monoids; Cain, Malheiro and Silva \cite{Cain17b} later answered the question for further classes of related monoids.
 On the other hand, Cain \textit{et al} \cite[Proposition 3.1]{Cain17} showed that the plactic monoid of \textit{countably infinite} rank does not satisfy any non-trivial semigroup identity; this implies that no \textit{single} identity is satisfied by plactic monoids of \textit{every} finite rank. A proposed proof of the original conjecture, using a mixture of a (non-faithful) tropical representation and more purely combinatorial techniques appeared in a recent preprint of Okni\'{n}ski \cite{Okninski19} but this has been retracted due to an incorrect proof. Since it is known that the upper triangular tropical matrices of each rank satisfy semigroup identities \cite{Izhakian14,Okninski15,Taylor17} (see also \cite{Daviaud18} for more discussion and \cite{Izhakian18} for the more general non-upper-triangular case), our faithful representations of plactic monoids by
tropical matrices lead to a proof of the conjecture.

There are also interesting questions about identities in the converse direction. The plactic monoid of rank $2$ is quite easily seen (see Remark~\ref{remark_rank2} below) to satisfy exactly the same identities as the bicyclic monoid, or equivalently \cite[Theorem 4.1]{Daviaud18} the monoid of $2 \times 2$ upper triangular tropical matrices. Izhakian \cite[Corollary 7.19]{Izhakian19} has shown that the plactic monoid of rank $3$ satisfies every identity satisfied by the monoid of $3\times 3$ upper triangular tropical matrices, and posed the question of whether the converse holds. We answer this question in the positive, showing that the variety generated by the plactic
monoid of rank $3$ coincides with that generated by the $3 \times 3$ upper triangular tropical matrices. More generally, we establish that the variety generated by the plactic monoid of rank $n$ is bounded below by the $n \times n$ upper triangular tropical matrix variety, and bounded above by the $d \times d$ upper triangular tropical matrix variety where $d$ is the integer part of $\frac{n^2}{4} +1$. We pose some questions
about whether these bounds can be improved.

In addition to this introduction, the paper comprises four sections. Section~\ref{sec_prelim} recaps relevant definitions, including those of the tropical semiring and the plactic
 monoids, and basic facts about them including the relationship between plactic monoid elements and (semi-standard) tableaux. Section~\ref{sec_rep} constructs a faithful representation of each plactic monoid by upper triangular tropical matrices. Section~\ref{sec_tropicaltoplactic} combines the results of Section~\ref{sec_rep} with some results from \cite{Daviaud18} to deduce results about identities satisfied by plactic monoids. Conversely, Section~\ref{sec_plactictotropical} uses in more detail the theory developed in \cite{Daviaud18} to prove that every identity satisfied in $\plac{n}$ is satisfied by the semigroup of all $n \times n$ upper triangular tropical matrices.

\section{Preliminaries}\label{sec_prelim}
We briefly recall the necessary definitions and notation. For further background on the plactic monoid and the combinatorics of tableaux we refer the reader to \cite{Fulton} and \cite{Lothaire}. For further information on the tropical semiring, semigroup-theoretic properties tropical matrices, and applications of these in semigroup theory we refer the reader to the now extensive literature in this area, including for example \cite{IJK16, IJK18, JK13, Pin98, Simon94}.

For each natural number $n$, let $[n]$ denote the set of integers from $1$ to $n$. For natural numbers $k \leq m$ we write $[k,m]$ for the
set of integers from $k$ to $n$ inclusive, and 
$\bark_m$ for the set of the $k$ largest integers in $[m]$, that is, $[m-k+1, m]$. We write $[n]^*$ for the free monoid on $[n]$, that is, the set of (possibly
empty) finite sequences of elements from $[n]$ (which we term \textit{words}) equipped with the operation of concatenation.  For $w \in [n]^*$ we write $|w|$ for the length of
$w$, and $w_i$ for the $i$th letter of $w$. We write $\epsilon$ for the empty word.

By a \textit{scattered subword} of $w \in [n]^*$ we formally mean a (possibly empty) strictly increasing finite sequence of integers from $[|w|]$,
considered as indices identifying letters in the word $w$ which compose the subword. However, for brevity we shall often identify the sequence $i_1 < \cdots < i_k$ with the word $w_{i_1} \dots w_{i_k}$. In particular, we shall sometimes say that a scattered subword is (for example) \textit{strictly descending}, by which we mean that the word $w_{i_1} \dots w_{i_k}$ (rather than the sequence $i_1, \dots i_k$ itself) is a strictly descending sequence.

For $n \geq 1$, the \textit{plactic monoid of rank $n$} is most easily defined as the monoid generated by the set $[n]$ subject to the \emph{Knuth relations}:
\begin{eqnarray*}
bca&=&bac \mbox{ for all } 1 \leq a <b \leq c \leq n; \textrm{ and } \\
cab&=&acb \mbox{ for all } 1 \leq a \leq b < c \leq n.
\end{eqnarray*}
We shall denote this monoid by $\plac{n}$. Plactic monoids admit the following alternative combinatorial description. A \textit{Young diagram} is a finite, left-aligned array of identically-sized boxes, with at least as many boxes in each row as in the row above\footnote{ Note that we depict our Young diagrams with row 1 (the longest row) at the bottom; this accords with the literature on plactic monoids, but differs from the literature on representation theory.}. We number the rows from the bottom up. A \emph{(Young) tableau over $[n]$} is any filling of the boxes of a Young diagram with numbers from $[n]$ (one per box)  in such a way that the entries strictly decrease when reading down each column, and weakly increase when reading from left to right along each row\footnote{Such tableaux are often referred to as \textit{semi-standard} in the literature. Since all tableaux considered in this article are semi-standard, we prefer the simpler term `tableaux' throughout.}. The \emph{column-reading} of a tableau is the word 
over $[n]$ obtained by reading the tableau down each column in turn, with the columns ordered left-to-right. Dually, the \emph{row-reading} of a tableau is the word obtained by reading the tableau along each row in turn from left-to-right, with the rows ordered from top-to-bottom.  For example, the tableau
$$\begin{ytableau}
 5 \\
 4 & 5 & 5 \\
 2 & 2 & 3 \\
  1 & 1 & 1 & 2 & 3
 \end{ytableau}
 $$
has column reading $542152153123$ and row reading $545522311123$.

It is well-known that the elements of $\plac{n}$ are in bijective correspondence with the set of tableaux over $[n]$, via the map which identifies a tableau with the equivalence class of words containing its column reading (or equivalently, its row reading). We shall often identify elements of the plactic monoid with the corresponding tableaux. Multiplication within $\plac{n}$ can be understood combinatorially as an application of \textit{Schensted's insertion algorithm} \cite{Schensted}.

The \textit{tropical semiring} $\trop$ is the set $\mathbb{R} \cup\{-\infty\}$ under the operations $a \oplus b = {\rm max}(a,b)$ and $a \otimes b = a+b$, where we define $-\infty + a=a+ -\infty = -\infty$. We write $M_n(\trop)$ to denote the semigroup (which is in fact  a monoid) of all $n \times n$ matrices with entries from $\trop$ under the matrix multiplication induced from operations of $\mathbb{T}$ in the obvious way. We say that $A \in M_n(\trop)$ is \emph{upper triangular} if $A_{i,j}=-\infty$ for all $i>j$, and write $UT_n(\trop)$ for the set (which forms a submonoid) of $n \times n$ upper triangular matrices
over $\trop$. If $S$ is a finite set we write $M_S(\trop)$ for the semigroup of matrices with rows and columns indexed by elements
of $S$; clearly $M_S(\trop)$ is isomorphic to $M_{|S|}(\trop)$ but the indexing by $S$ will sometimes prove helpful.

\section{Tropical Representations of Plactic Monoids}\label{sec_rep}

In this section we shall construct for each finite rank plactic monoid a map to an upper triangular matrix monoid. We then study the properties of
this map, eventually showing that it is in fact a faithful representation. Throughout this section we fix a positive integer $n$. 

For $S \in \twon$ we write $S^i$ (for $i = 1, \dots, |S|$) to denote the $i$th smallest element of $S$ (so $S = \lbrace S^1 < S^2 < \dots < S^{|S|} \rbrace$). If $x \in S$ with $x=S^i$, then we shall say that $i$ is the \textit{row number} of $x$ in $S$, or $x$ is \textit{contained in row $i$ of $S$}.
 We partially order the sets in $\twon$ by $S \leq T$ if $|S| \geq |T|$ and $S^i \leq T^i$ for each $i \in [|T|]$. 
Notice that for any $k \in[n]$, the sets $[k]$ and $\bark_n$ are respectively minimal and maximal among all sets of cardinality $k$. It may assist
the reader to think of sets in $\twon$ as possible columns of tableaux over the alphabet $[n]$; from this viewpoint the ``rows'' of subsets are what one would think, and the partial order is given by $S \leq T$ if the column $T$ can appear to the right of the column $S$ in a tableau.

If $P,Q \in \twon$ we use the notation $[P,Q]$ for the order interval from $P$ to $Q$ (a subset of $\twon$), and $\cup [P,Q]$ for the union of
the sets in the order interval (an element of $\twon$). We say that a word $w \in [n]^*$ is \textit{readable from $P$ to $Q$} if there exists an ordered sequence of sets
$$P \leq S_1 \leq S_2 \leq \dots \leq S_{|w|} \leq Q$$
such that $w_i \in S_i$ for each $i$. We emphasise that there is no requirement that $P = S_1$ or $S_{|w|} = Q$: the sequence of sets $S_i$ merely
needs to be contained within the interval $[P,Q]$. Note also that if $w$ can be read from $P$ to $Q$ then all scattered subwords of $w$ can
be read from $P$ to $Q$.

We define a map $\rho_n : [n]^* \to \utst$ by letting
$$\rho_n(x)_{P,Q} \ = \ 
\begin{cases}
-\infty &\textrm{ if } |P| \neq |Q| \textrm{ or } P \nleq Q; \\
1 &\textrm{ if } |P| = |Q|\textrm{ and } x \in \cup [P,Q]; \\
0 &\textrm{ otherwise (that is, if $|P| = |Q|$, $P \leq Q$, $x \notin \cup [P,Q]$)},
\end{cases}$$
for each generator $x \in [n]$, extending multiplicatively to longer words, and defining the map on the empty word by
$$\rho_n(\epsilon) \ = \ 
\begin{cases}
-\infty &\textrm{ if } |P| \neq |Q| \textrm{ or } P \nleq Q; \\
0 &\textrm{ otherwise.}
\end{cases}$$
(See Remark~\ref{remark_singletonblockcain} below for comments on the reason for choosing $\rho_n(\epsilon)$ to be this
matrix, in preference to the identity matrix in $UT_n(\trop)$.)
It is straightforward to verify that $\rho_n(\epsilon)$ is an idempotent which acts as an identity element for matrices of the form $\rho_n(x)$ with
$x \in [n]$, which is all that is required to show that $\rho_n : [n]^* \to \utst$ is a semigroup morphism. 

Notice that, because the relation $\leq$ is transitive, the position of the $-\infty$ entries in the generators is preserved under multiplication, so
for any $w \in [n]^*$ we have $\rho_n(w)_{P,Q} = -\infty$ if and only if  $|P| = |Q|$ or $P \nleq Q$.

Our main aim in this section is to show that the map $\rho_n$ induces a well-defined, faithful representation of the plactic
monoid $\plac{n}$.
\begin{figure}
$$\left( \begin{array}{c|cccc|cccccc|cccc|c}
1 &&&&&&&&&&&&&&& \\
\hline
& 1 & 1 & 1 & 1 &&&&&&&&&&& \\
&    & 0 & 1 & 1 &&&&&&&&&&& \\
&    &   &  1 & 1 &&&&&&&&&&& \\
&    &   &     & 1 &&&&&&&&&&& \\
\hline
&&&&& 0 & 1 & 1 & 1 & 1 & 1 &&&&& \\
&&&&&    & 1 & 1 & 1 & 1 & 1 &&&&& \\
&&&&&    &    & 1 &    & 1 & 1 &&&&& \\
&&&&&    &    &    & 0 & 0 & 1 &&&&& \\
&&&&&    &    &    &    & 0 & 1 &&&&& \\
&&&&&    &    &    &    &    & 0 &&&&& \\
\hline
&&&&&&&&&&& 0 & 0 & 1 & 1 & \\
&&&&&&&&&&&    & 0 & 1 & 1 & \\
&&&&&&&&&&&    &    & 1 & 1 & \\
&&&&&&&&&&&    &   &    & 0 & \\
\hline
&&&&&&&&&&&&&&& 0
\end{array}\right)$$
\caption{The image of the generator $3$ under the representation $\rho_4 : \plac{4} \to M_{2^{[4]}}(\trop)$. Empty positions represent $-\infty$. See Example~\ref{example_matrix} for the ordering of rows and columns.}
\label{fig_4x4}
\end{figure}

\begin{example}\label{example_matrix}
Figure~\ref{fig_4x4} shows the $16 \times 16$ matrix $\rho_4(3)$, which will be 
the image of the generator $3$ in our representation of $\plac{4}$. Blank entries represent $-\infty$.
The matrix is upper triangular because $\rho_n(x)_{P,Q} = -\infty$ whenever $P \nleq Q$, and we have chosen to order the columns and rows in a way compatible with the order $\leq$. It is block diagonal, because $\rho_n(x)_{P,Q} = -\infty$ whenever $|P| \neq |Q|$ and we have we have grouped together columns and rows corresponding to sets of the same size. It has (from top to bottom):
\begin{itemize} 
\item a $1\times 1$ block indexed by the single set $\{1,2,3,4\}$;
\item a $4\times 4$ block indexed by the subsets $\{1,2,3\} < \{1,2,4\} < \{1,3,4\} < \{2,3,4\}$;
\item a $6\times 6$ block indexed by the subsets $\{1,2\}$, $\{1,3\}$, $\{2,3\}$, $\{1,4\}$, $\{2,4\}$ and $\{3,4\}$;
\item a $4\times 4$ block indexed by the subsets $\{1\} < \{2\} < \{3\} < \{4\}$; and
\item a $1\times 1$ block indexed by the empty set.
\end{itemize}
The two $1 \times 1$ blocks are unimportant, and are included in our definition only because their presence allows a more uniform treatment which simplifies
the proofs; see Remark~\ref{remark_dimension} below. The two
$4 \times 4$ blocks correspond to the representations previously constructed by Izhakian \cite{Izhakian19} and by Cain \textit{et al} \cite{Cain17}; see Remarks~\ref{remark_singletonblockcain}, \ref{remark_izhakiancain} and \ref{remark_bigblockcain}.

The most interesting block is the $6 \times 6$ block corresponding to sets of size two. These sets are not totally ordered because $\{2, 3\}$ and $\{1,4\}$ are incomparable. We have arbitrarily chosen to draw the matrix with $\{1, 4 \}$ before $\{2,3\}$, but the other choice would also have yielded an upper triangular matrix. Note how the fact that $\{ 1, 4 \} \nleq \{ 2, 3 \}$ manifests itself in a $-\infty$ entry above the diagonal in this block. The case $n=4$ is the smallest rank where this behaviour emerges, and this qualitative difference between ranks $3$ and $4$ explains in some philosophical sense why previous methods \cite{Izhakian19,Cain17} which were successful for $\plac{3}$ are not able to faithfully represent $\plac{4}$ (see also Remark~\ref{remark_cainnotfaithful} below).
In yet higher rank there will be many more incomparable subsets, and hence many more $-\infty$ entries above the diagonal within the blocks. 
\end{example}

The following lemma gives a useful combinatorial description of the non-$(-\infty)$ entries of $\rho_n(w)$ for a general word $w$.
\begin{lemma}\label{lemma_scatter}
For every $w \in [n]^*$ and $P,Q \in \twon$ with $|P| = |Q|$ and $P \leq Q$, the entry $\rho_n(w)_{P,Q}$ is the maximum length of a scattered subword
of $w$ that can be read from $P$ to $Q$.
\end{lemma}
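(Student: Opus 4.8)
The plan is to unwind the definition of tropical matrix multiplication. Since $\rho_n$ is multiplicative, for a word $w$ of length $m = |w| \geq 1$ we have $\rho_n(w) = \rho_n(w_1) \otimes \cdots \otimes \rho_n(w_m)$ (the case $w = \epsilon$ is immediate, since $\rho_n(\epsilon)_{P,Q} = 0$ and the empty scattered subword is vacuously readable from $P$ to $Q$ when $P \leq Q$). Expanding the product, $\rho_n(w)_{P,Q}$ is the maximum over all sequences $P = R_0, R_1, \dots, R_m = Q$ of $\sum_{i=1}^m \rho_n(w_i)_{R_{i-1},R_i}$. A summand equals $-\infty$ unless $|R_{i-1}| = |R_i|$ and $R_{i-1} \leq R_i$, so the relevant sequences are exactly the chains $P = R_0 \leq R_1 \leq \cdots \leq R_m = Q$ of subsets all of cardinality $|P| = |Q|$ (of which there is at least one, e.g.\ $R_1 = \cdots = R_m = Q$); and along such a chain the $i$th step contributes $1$ if $w_i \in \cup[R_{i-1},R_i]$ and $0$ otherwise. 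Hence $\rho_n(w)_{P,Q}$ equals the maximum, over all such chains, of $\#\{i \in [m] : w_i \in \cup[R_{i-1},R_i]\}$, and it remains to match this number with the maximum length of a readable scattered subword.

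For the inequality ``$\leq$'' I would take a chain $P = R_0 \leq \cdots \leq R_m = Q$ attaining the maximum value $V$ and let $i_1 < \cdots < i_V$ be the indices with $w_{i_j} \in \cup[R_{i_j - 1},R_{i_j}]$. For each $j$ choose a set $S_j$ in the order interval $[R_{i_j-1},R_{i_j}]$ with $w_{i_j} \in S_j$. Then $P \leq R_{i_j-1} \leq S_j \leq R_{i_j} \leq Q$, and for $j < j'$ we have $i_j \leq i_{j'} - 1$, whence $S_j \leq R_{i_j} \leq R_{i_{j'}-1} \leq S_{j'}$; so $P \leq S_1 \leq \cdots \leq S_V \leq Q$ witnesses that the scattered subword $w_{i_1}\cdots w_{i_V}$ is readable from $P$ to $Q$.

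For the reverse inequality, given a scattered subword $w_{i_1}\cdots w_{i_k}$ readable from $P$ to $Q$ via $P \leq S_1 \leq \cdots \leq S_k \leq Q$ with $w_{i_j} \in S_j$, I would construct a chain by keeping the value constant between consecutive used positions: put $S_0 = P$ and $S_{k+1} = Q$, set $R_i = S_{\phi(i)}$ for $0 \leq i \leq m-1$ where $\phi(i) = \#\{j : i_j \leq i\}$, and set $R_m = Q$. One checks that $P = R_0 \leq R_1 \leq \cdots \leq R_m = Q$ is a chain of sets of cardinality $|P|$, that $R_{i_j - 1} = S_{j-1}$ and $S_j \in [R_{i_j-1}, R_{i_j}]$, and hence that each of the $k$ steps $i_1,\dots,i_k$ contributes $1$; thus the chain has value at least $k$, giving $\rho_n(w)_{P,Q} \geq k$. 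Maximising over readable scattered subwords and combining with the previous paragraph yields the lemma.

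There is no deep obstacle here: essentially all the content is in spotting the ``maximum-over-chains'' reformulation in the first step, after which both inequalities are bookkeeping. The only point needing a little care is the interpolation in the last paragraph --- in particular the boundary behaviour when the chosen subword begins with $w_1$ (so that $R_i = S_0 = P$ for small $i$) or ends with $w_m$ (handled directly via $R_{m-1} = S_{k-1} \leq S_k \leq Q = R_m$).
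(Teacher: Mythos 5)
Your proof is correct and follows essentially the same route as the paper's: both directions unwind the tropical product $\rho_n(w)_{P,Q}$ as a maximum over chains $P = R_0 \leq \cdots \leq R_{|w|} = Q$, extract the witnessing sets $S_j$ from a maximising chain for one inequality, and interpolate a readability witness into a full chain (your $\phi$ is the paper's ``take the greatest used index below $j$'' rule) for the other. The only difference is presentational --- you state the max-over-chains reformulation explicitly up front, whereas the paper folds it into each direction --- so there is nothing of substance to flag.
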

\begin{proof}
If $w = \epsilon$ or $|w| = 1$ then the claim is an immediate consequence of the definition of $\rho_n$, so assume $|w| \geq 2$.

Let $i_1 < \dots < i_k$ be a scattered subword of $w$ of maximal length readable from $P$ to $Q$ (chosen arbitrarily if there are multiple such). Let $I = \lbrace i_1, \dots, i_k \rbrace$. 

First we show that $\rho_n(w)_{P,Q} \geq |I|$. If $|I| = 0$ then since $P \leq Q$ we have $\rho_n(w)_{P,Q} \neq -\infty$
so the inequality is immediate. Otherwise, by the definition of readability there exists an ordered sequence of sets
$$P \leq S_{i_1} \leq S_{i_2} \leq \dots \leq S_{i_k} \leq Q$$
such that $w_i \in S_i$ for each $i \in I$. For each $j \in [|w|] \setminus I$ define $S_j$ to be $S_i$ where $i$ is the greatest element of $I$ which is less than $j$,
or $S_i = S_{i_1}$ if $j$ is less than all elements of $I$. For notational convenience, define $S_{|w|+1} = S_{i_k}$. Note that we
have $S_j \leq S_{j+1}$ for all $j \in [|w|]$.

It follows from the definition of $\rho_n$, the fact that $w_i \in S_i$ for all $i \in I$ and the known order relations between the sets $S_i$ that for each $j \in [|w|]$ we have:
\begin{itemize}
\item $\rho_n(w_j)_{S_j,S_{j+1}} = 1$ if $j \in I$; and
\item $\rho_n(w_j)_{S_j,S_{j+1}} \geq 0$ if $j \notin I$.
\end{itemize}
Noting that for each generator $x$, if $P \leq B \leq C \leq Q$ then $ \rho_n(x)_{P,Q} \geq  \rho_n(x)_{B,C}$, we also have
\begin{itemize}
\item $\rho_n(w_1)_{P, S_2} \geq \rho_n(w_1)_{S_1, S_2}$; and
\item $\rho_n(w_{|w|})_{S_{|w|}, Q} \geq \rho_n(w_1)_{S_{|w|}, S_{|w|+1}}$.
\end{itemize}
Now combining these facts with the definition of matrix multiplication,
\begin{align*}
\rho_n(w)_{P,Q} \ &\geq \ \rho_n(w_1)_{P,S_2} + \sum_{j=2}^{|w|-1} \rho_n(w_j)_{S_j,S_{j+1}} + \rho_n(w_{|w|})_{S_{|w|}, Q} \\
&\geq \sum_{j=1}^{|w|} \rho_n(w_j)_{S_j,S_{j+1}} \\
&\geq \ |I|.
\end{align*}

Conversely suppose $\rho_n(w)_{P,Q} = l$. By the definition of matrix multiplication, there exist sets $P = S_1, \dots, S_{|w|+1} = Q$ such
that
$$l \ = \ \rho_n(w)_{P,Q} \ = \ \sum_{j=1}^{|w|} \rho_n(w_j)_{S_j,S_{j+1}}.$$
Since the entries in $\rho_n(x)$ for any
generator $x \in [n]$ are all $0$, $1$ or $-\infty$, this means that none of the terms 
$\rho_n(w_j)_{S_j,S_{j+1}}$ are $-\infty$, and that exactly $l$ of them are $1$. From the former
we deduce that $S_j \leq S_{j+1}$ and that $|S_j| = |P|$ for each $j$. From the latter
we deduce that there is a subset $I \subseteq [|w|]$ such that $|I| = l$ and for all $i \in I$ we have
 $w_i \in \cup [S_i,S_{i+1}]$. For each $i \in I$ we may choose a set $T_i$ such that $S_i \leq T_i \leq S_{i+1}$
(hence $|T_i| = |P|$ too) and $w_i \in T_i$. But now the $T_i$s for $i \in I$ form an ascending sequence in the partial order in
the interval $[P,Q]$, from which it follows that the scattered subword consider of the elements of $I$ in order can be read from $P$ to $Q$. Thus, $w$ has a scattered subword
of length $l$ which can be read from $P$ to $Q$.
\end{proof}

\begin{lemma}\label{lemma_wrongorder}
Suppose that $a, b \in [n]$ with $a < b$ and that the word $ba$ can be read from $P$ to $Q$, where $P, Q \in \twon$ with $|P|=|Q|$. Then there exists a set $S$ in the interval $[P,Q]$ such that $a, b \in S$. In particular, the word $ab$ can also be read from $P$ to $Q$.
\end{lemma}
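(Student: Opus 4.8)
The plan is to unwind the definition of readability and then produce the required set $S$ by a single, carefully chosen modification of $S_1$. I would start from the chain $P \leq S_1 \leq S_2 \leq Q$ in $\twon$ witnessing that $ba$ is readable, so that $b \in S_1$ and $a \in S_2$; since $|P| = |Q|$ every set in the chain, and in particular $S_1$ and $S_2$, has this common cardinality, say $m$. Writing $j$ for the row of $a$ in $S_2$, the comparison $S_1 \leq S_2$ gives $S_1^j \leq S_2^j = a$, so $\{k : S_1^k \leq a\}$ is nonempty and I would set $c_0 := \max\{k : S_1^k \leq a\}$, noting $j \leq c_0$. Because $b \in S_1$ with $b > a \geq S_1^{c_0}$, the element $b$ must occupy a row of $S_1$ strictly above $c_0$; hence $c_0 < m$, the maximality of $c_0$ forces $S_1^{c_0+1} > a$, and $b$ appears among $S_1^{c_0+1}, \dots, S_1^m$.

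The set I would then propose is obtained by replacing the $c_0$-th entry of $S_1$ by $a$: take $S := \{S_1^1, \dots, S_1^{c_0-1},\, a,\, S_1^{c_0+1}, \dots, S_1^m\}$, with $S = S_1$ in the degenerate case $a = S_1^{c_0}$. The rest of the argument is routine verification, which I would organise as four checks: (i) $S$ is a genuine $m$-element set in $\twon$, since $S_1^{c_0-1} < S_1^{c_0} \leq a < S_1^{c_0+1}$ keeps the listed entries strictly increasing; (ii) $a, b \in S$, the point for $b$ being that it sits among the unaltered entries $S_1^{c_0+1}, \dots, S_1^m$; (iii) $P \leq S$, because $S$ and $S_1$ have the same size and agree outside row $c_0$, where $S^{c_0} = a \geq S_1^{c_0}$, so $P \leq S_1 \leq S$ by transitivity; and (iv) $S \leq Q$, because $S^k = S_1^k \leq S_2^k$ for $k \neq c_0$ while $S^{c_0} = a = S_2^j \leq S_2^{c_0}$ using $j \leq c_0$, so $S \leq S_2 \leq Q$. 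This gives a set $S$ in the interval $[P,Q]$ containing both $a$ and $b$, and the final sentence of the statement then follows immediately by reading $a$ and then $b$ along the length-two chain $P \leq S \leq S \leq Q$.

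The step I expect to be the real obstacle is guessing the correct $S$ in the first place. The obvious candidates are flawed: gluing a prefix of $S_2$ onto a suffix of $S_1$ need not produce a strictly increasing sequence, and placing $a$ and $b$ in their ``home'' rows of $S_2$ and $S_1$ and interpolating can fail because there may be too few integers strictly between $a$ and $b$ (this already occurs at rank $4$). The delete-and-insert recipe above is designed to dodge both problems at once: it preserves cardinality automatically, it preserves strict monotonicity because $a$ drops into a gap of $S_1$ bounded by consecutive entries that straddle it, and the single inequality $c_0 \geq j$ needed to retain $S \leq S_2$ is exactly the row-$j$ instance of the hypothesis $S_1 \leq S_2$.
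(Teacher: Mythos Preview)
Your argument is correct, and it is genuinely simpler than the paper's. The paper proceeds by a maximality-plus-case-analysis argument: it chooses the witnessing set containing $b$ to be maximal in $[P,U]$ (where $U$ plays the role of your $S_2$), then argues that if this maximal set does not already contain $a$, one can either splice a prefix of $U$ onto a suffix of $S$ at a row where they agree, or show that the entries of $S$ just below $b$ form a consecutive block and construct a strictly larger set, contradicting maximality. Your approach sidesteps all of this: you make a single explicit modification of $S_1$---replacing $S_1^{c_0}$ by $a$ at the highest row where $S_1$ still lies weakly below $a$---and verify the four required properties directly. The key observation that makes your construction work cleanly is that $c_0 \geq j$ (the row of $a$ in $S_2$), which you correctly extract from $S_1 \leq S_2$; this single inequality is exactly what guarantees $S \leq S_2$ in the altered row. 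Your route buys a shorter, case-free proof; the paper's route perhaps makes it more visible \emph{why} such a set must exist (one keeps pushing $S$ up until it absorbs $a$), but at the cost of more bookkeeping.
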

\begin{proof}
Let $k = |P| = |Q|$.
By definition, there exists an ordered sequence of sets $P \leq S \leq U \leq Q$  such that $b \in S$ and $a \in U$. By replacing $S$ if necessary we may clearly assume that $S$ is maximal with this property, that is, there is no set $S'$ with $S < S' \leq U$ and $b \in S'$. We will show that for this
choice of $S$ one has $a,b \in S$. If $S = U$ then we immediately have $a, b \in S$ and we are done. Suppose then that $S<U$.

Let $i, j \in [k]$ be indices such that $S^j = b$ and $U^i = a$. Since $S < U$ and $b > a$ we must have $i < j$.
Notice that if there was a position $q$ with $i < q < j$ and $S^q = U^q$ then 
setting
$$T \ = \ \{U^1< \cdots< U^q = S^q < S^{q+1}< \cdots< S^k\}$$
would give $S \leq T \leq U$ and $a, b \in T$, which by the maximality of $S$ means $S=T$ and $a,b \in S$ as required.

Assume then that there is no such position, that is, that $S^q \neq U^q$ for all $i < q < j$. In this case we claim that for every $1 \leq m < j-i$, we have $S^{j-m} = b-m$. Indeed, if not then there is some $1 \leq m < j-i$ such that $S^{j-m} < S^{j-m+1}-1$.
Replacing $S^{j-m}$ with $S^{j-m}+1$ then yields a set $T$ with $b \in T$ and $S < T \leq U$, where the latter inequality holds because we have assumed that $S^{j-m} \neq U^{j-m}$. This contradicts the maximality assumption on $S$, establishing the claim.

Now if there exists $1 \leq m < j-i$ with $S^{j-m} = a$ then $a,b \in S$ and we are done. Otherwise we have $S^{i+1} > a$, in which case setting
$$T \ = \ \{U^1< \cdots< U^{i}< S^{i+1}< \cdots<  S^j < \cdots < S^k\}$$
once again gives $S \leq T \leq U$ and $a,b \in T$, and so by maximality of $S$ we must have $S=T$ and $a,b \in S$.
\end{proof}

\begin{lemma}\label{lemma_morphism}
The map $\rho_n$ induces a well-defined morphism from $\plac{n}$ to $\utst$.
\end{lemma}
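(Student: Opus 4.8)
The plan is to show that $\rho_n$ respects the two families of Knuth relations; since it is already established that $\rho_n \colon [n]^* \to \utst$ is a semigroup morphism with $\rho_n(\epsilon)$ acting as identity on the generators, this suffices to induce a well-defined morphism on $\plac{n}$. Fix $P, Q \in \twon$. If $|P| \neq |Q|$ or $P \nleq Q$ then $\rho_n(w)_{P,Q} = -\infty$ for every word $w$ (as noted just after the definition of $\rho_n$), so there is nothing to verify at such a position; thus assume $|P| = |Q|$ and $P \leq Q$. By Lemma~\ref{lemma_scatter}, $\rho_n(w)_{P,Q}$ equals the maximum length of a scattered subword of $w$ readable from $P$ to $Q$. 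So the task reduces to showing that, for each such pair $(P,Q)$, the two sides of a Knuth relation have readable scattered subwords of the same maximum length.

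It is convenient to treat only the first family of Knuth relations. Reversing a word and replacing each letter $i$ by $n+1-i$ defines an anti-automorphism $w \mapsto \overline{w}$ of $[n]^*$ carrying the first family ($bca = bac$, $a < b \leq c$) bijectively onto the second ($cab = acb$, $a \leq b < c$); moreover the complementation $S \mapsto \overline S = \{n+1-s : s \in S\}$ is an order-reversing bijection of $\twon$ under which ``$w$ is readable from $P$ to $Q$'' is equivalent to ``$\overline w$ is readable from $\overline Q$ to $\overline P$''. Combining this with Lemma~\ref{lemma_scatter} gives $\rho_n(\overline w)_{P,Q} = \rho_n(w)_{\overline Q, \overline P}$ in every position, so $\rho_n$ respects the second family as soon as it respects the first. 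It therefore suffices to prove $\rho_n(bca)_{P,Q} = \rho_n(bac)_{P,Q}$ for all $1 \leq a < b \leq c \leq n$ and all $P \leq Q$ with $|P| = |Q|$.

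Every scattered subword of $bca$ except $ca$ and $bca$ itself is also a scattered subword of $bac$, and symmetrically every scattered subword of $bac$ except $ac$ and $bac$ is a scattered subword of $bca$. The inequality $\rho_n(bac)_{P,Q} \geq \rho_n(bca)_{P,Q}$ then follows with little work: taking a longest readable scattered subword $u$ of $bca$, if $u$ is one of the common subwords it is also a readable subword of $bac$; if $u = ca$ then, since $a < c$, Lemma~\ref{lemma_wrongorder} shows that $ac$ is readable from $P$ to $Q$, of the same length; and if $u = bca$, then applying Lemma~\ref{lemma_wrongorder} to the descending factor $ca$ of a readable sequence for $bca$ lets one absorb $a$ and $c$ into a single set lying above the set realising $b$, whence $bac$ is readable too.

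The reverse inequality $\rho_n(bca)_{P,Q} \geq \rho_n(bac)_{P,Q}$ is the heart of the matter. Taking a longest readable scattered subword $u$ of $bac$, the only cases not covered by the preceding symmetry are $u = ac$ and $u = bac$, and for these I must produce a readable scattered subword of $bca$ of length $2$, respectively $3$. Both follow from the claim: \emph{if $a < b \leq c$ and $bac$ (respectively $ac$) is readable from $P$ to $Q$, then so is $bca$ (respectively one of $bc$, $ca$)}, which I would prove by an explicit construction of intermediate subsets of the interval $[P,Q]$, in the spirit of (and invoking) Lemma~\ref{lemma_wrongorder}. For $bac$ readable, one first applies Lemma~\ref{lemma_wrongorder} to the descending factor $ba$ of a readable sequence to obtain a set $W \in [P,Q]$ with $a, b \in W$ lying below a set $V \in [P,Q]$ with $c \in V$; one then forms $T$ by keeping the rows of $W$ up to the row of $a$ and filling the higher rows from $V$, so that $T \in [W,V] \subseteq [P,Q]$, contains $a$, satisfies $W \leq T$, and hence together with $S := W$ witnesses readability of $bca$ provided $T$ also contains $c$. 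This succeeds unless the row of $c$ in $V$ is at most the row of $a$ in $W$ (which forces $a$ to sit low in $W$), in which case one instead inserts $c$ immediately above $a$ in $T$ and fills the remaining rows from $V$, verifying directly that $P \leq W \leq T \leq Q$ and $W \leq T$ still hold; the case where only $ac$ is readable is handled by the same kind of row-index bookkeeping, starting from a readable sequence for $ac$. This finite case analysis — elementary but delicate in its tracking of row indices — is the main obstacle; granted it, the lemma follows by assembling the cases above.
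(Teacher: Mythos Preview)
Your overall strategy matches the paper's: reduce via Lemma~\ref{lemma_scatter} to comparing maximum lengths of readable scattered subwords, handle the full-word case by invoking Lemma~\ref{lemma_wrongorder} and then explicitly constructing intermediate sets by splicing rows. Your length-3 construction for ``$bac$ readable $\Rightarrow bca$ readable'' is essentially the paper's Case~1 converse (the paper splits at row $m-1$ in Case~1A and inserts $c$ at row $j$ in Case~1B; you split at the row of $a$ and insert $c$ at the row just above; both work). The anti-automorphism reduction to a single family of Knuth relations is a genuine and valid simplification that the paper does not make --- the paper instead runs through Case~2 ($cab=acb$) by hand, in parallel with Case~1.

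There is, however, one real gap. The paper does \emph{not} treat the length-$2$ case by subword bookkeeping at all; instead it argues that whether $\rho_n(u)_{P,Q}$ equals $-\infty$, $0$, or $1$ depends only on the multiset of letters of $u$, so the two sides of a Knuth relation agree in those cases automatically, and only the value $3$ (full-word readability) needs explicit argument. Your proposal instead asserts that ``$ac$ readable $\Rightarrow$ one of $bc$, $ca$ readable'' can be handled by ``the same kind of row-index bookkeeping, starting from a readable sequence for $ac$'', but this is not quite right: starting from a readable sequence for $ac$ there is no descending factor on which to invoke Lemma~\ref{lemma_wrongorder}, and the splicing construction you describe for the length-$3$ case needs a set containing both $a$ and $b$, which you do not have here. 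The correct argument is a different case split: if $b\in\cup[P,Q]$ then $bc$ (weakly ascending with both letters in $\cup[P,Q]$) is readable; if $b\notin\cup[P,Q]$ then the gap at $b$ forces the row indices available to $a$ and to $c$ to be strictly separated (any row $r$ with $a\in[P^r,Q^r]$ has $Q^r<b$, and any row $s$ with $c\in[P^s,Q^s]$ has $P^s>b$, so $r<s$), which lets you splice $a$ into row $r$ and $c$ into row $s$ of a single set in $[P,Q]$, giving $ca$ readable. This is short, but it is not the same mechanism as your length-$3$ argument, so the handwave conceals a genuine (if small) piece of work.
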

\begin{proof}
It suffices to show that $\rho_n$ respects the Knuth relations which define $\plac{n}$, that is, for each defining relation $u=v$ we have
$\rho_n(u) = \rho_n(v)$.

The defining relations each have both sides of length $3$, and in all cases both sides of the relation feature the same letters with the
same multiplicity, with only the order differing. Notice that for any word $u$ of length $3$, and any $P,Q \in \twon$ we have the following
facts, which follow from the block diagonal structure of our matrices together with Lemma \ref{lemma_scatter}:
\begin{itemize}
\item Every entry of  $\rho_n(u)$ is $-\infty$, $0$, $1$, $2$ or $3$ (since the maximum contribution given by each letter is $1$).
\item $\rho_n(u)_{P,Q} = -\infty$ if and only $P \not\leq Q$ or $|P| \neq |Q|$.
\item $\rho_n(u)_{P,Q} = 0$ if and only if the support of $u$ does not intersect with  $\cup [P,Q]$.
\item $\rho_n(u)_{P,Q} = 1$ if and only $u$ contains a single (unrepeated) letter from $\cup [P,Q]$.
\end{itemize}
It follows that if $u=v$ is a defining relation then $\rho_n(u)_{P,Q} = -\infty$ [respectively $0$, $1$] if and only if $\rho_n(v)_{P,Q} = -\infty$ [respectively $0$, $1$], so it will suffice to show that $\rho_n(u)_{P,Q} = 3$ if and only if $\rho_n(v)_{P,Q} = 3$. By Lemma~\ref{lemma_scatter}, this means it will
suffice to show that $u$ can be read from $P$ to $Q$ if and only if $v$ can be read from $P$ to $Q$, for each defining relation $u=v$ and each $P,Q \in \twon$ with $|P| = |Q|$. We do this by analysing separately the two types of relation. Suppose, then, that $|P| = |Q| = k$.

\textbf{Case 1}. Suppose $u=bca$ and $v=bac$ where $a< b \leq c$.
If $u$ can be read from $P$ to $Q$, then there is a sequence $P \leq S_1 \leq S_2 \leq S_3 \leq Q$ with $b \in S_1, c\in S_2$ and $a \in S_3$. By Lemma~\ref{lemma_wrongorder} (applied to the descending word $ca$ which can be read from $S_2$ to $S_3$) there exists $T$ with $P \leq S_1 \leq S_2 \leq T \leq S_3 \leq Q$ and $a,c \in T$, whence $v$ can also be read from $P$ to $Q$.

Conversely, suppose that $v$ can be read from $P$ to $Q$, that is, there is a sequence $P \leq S_1 \leq S_2 \leq S_3 \leq Q$ with $b \in S_1, a \in S_2$ and $c \in S_3$. By Lemma~\ref{lemma_wrongorder} (applied to the descending word $ba$ which can be read from $S_1$ to $S_2$) there exists $T$ with $P \leq S_1 \leq T \leq S_2 \leq S_3 \leq Q$ and $a,b \in T$. Notice that if $c \in T$ (in particular, if $b=c$), then we are done, since in this case $u$ can also be read from $P$ to $Q$. Otherwise let $U =S_3$ and let $j, i, m \in [k]$ be the indices such that $T^j =b$, $T^i=a$ and $U^m=c$. Notice that $i < j$. There are two configurations to consider:

\begin{itemize}
\item \textbf{Case 1A.} Suppose $i<m$. In this case set
$$R \ =\ \{ T^1 < \cdots < T^{m-1} < U^{m} < \cdots < U^k \}.$$
The fact that $T^{m-1} < U^m$ follows
from the facts that $T^{m-1} < T^m$ and $T \leq U$. Since $T \leq U$ it is clear that $T^p \leq R^p \leq U^p$ for all $p$,
so $T \leq R \leq U$.
\item \textbf{Case 1B.} Suppose $i\geq m$. In this case set
$$R \ = \ \{T^1 < \cdots < T^{j-1} <  c < U^{j+1} < \cdots < U^k\}.$$
The fact that $T^{j-1} < c$ is because $T^{j-1} < T^j = b < c$, while the fact $c < U^{j+1}$ is because $c = U^m \leq U^i < U^j < U^{j+1}$.  Since $T \leq U$ it is immediate that $T^p \leq R^p \leq U^p$ for all $p \neq j$. Noting that $T^j=b<c=U^m<U^j$ we again conclude that $T \leq R \leq U$.
\end{itemize}
Thus in each case we have $P \leq T \leq R \leq U \leq Q$ with $b \in T$ and $a, c \in R$ (since $a = T^i$ and $c = U^m$), showing that both $u$ and $v$ can be read from $P$ to $Q$.

\textbf{Case 2.} Suppose now that $u=cab$ and $v=acb$ where $a\leq b < c$.
This case is treated similarly to Case 1. If $u$ can be read from $P$ to $Q$, then there is a sequence $P \leq S_1 \leq S_2 \leq S_3 \leq Q$ with $c \in S_1, a\in S_2$ and $b \in S_3$. By Lemma~\ref{lemma_wrongorder} (applied to the descending word $ca$ which can be read from $S_1$ to $S_2$) there exists $T$ with $P \leq S_1 \leq T \leq S_2 \leq S_3 \leq Q$ and $a,c \in T$, whence $v$ can also be read from $P$ to $Q$.

Conversely, suppose that $v$ can be read from $P$ to $Q$, that is, there is a sequence $P \leq S_1 \leq S_2 \leq S_3 \leq Q$ with $a \in S_1, c \in S_2$ and $b \in S_3$. By Lemma~\ref{lemma_wrongorder} (applied to the descending word $cb$ which can be read from $S_2$ to $S_3$) there exists $T$ with $P \leq S_1  \leq S_2 \leq T \leq  S_3 \leq Q$ and $c,b \in T$. Notice that if $a \in T$ (in particular, if $a=b$), then we are done, since in this case $u$ can also be read from $P$ to $Q$. Otherwise let $S = S_1$ and let $j, i, m \in [k]$ be indices such that $S^m =a$,  $T^i=b$ and $T^j=c$. Then $i < j$ and there are again two configurations to consider:

\begin{itemize}
\item \textbf{Case 2A.} Suppose $m<j$. In this case set
$$R \ =\ \{ S^1 < \cdots < S^{j-1} < T^{j} < \cdots < T^k \}.$$
The fact that $S^{j-1} < T^j$ follows
from the facts that $S^{j-1} < S^j$ and $S \leq T$. Since $S \leq T$ it is clear that $S^p \leq R^p \leq T^p$ for all $p$, so $S \leq R \leq T$.
\item \textbf{Case 2B.} Suppose $m \geq j$. In this case set
$$R \ = \ \{S^1 < \cdots < S^{i-1} < a < T^{i+1} < \cdots < T^k \}.$$
The fact that $S^{i-1} < a$ is because $S^{i-1} < S^j \leq S^m = a$, while the fact $a < T^{i+1}$ is because $a < b = T^i < T^{i+1}$.  Since $S \leq T$ it is immediate that $S^p \leq R^p \leq T^p$ for all $p \neq i$. Since $S^i<S^m=a<b=T^i$, we again have $S \leq R \leq T$.
\end{itemize}
In each case we have
$P \leq S \leq R \leq T \leq Q$ with $a, c \in R$ (since $a = S^m$ and $c = T^j$) and $b \in T$, showing that both $u$ and $v$ can be read from $P$ to $Q$.
\end{proof}

We shall, in a slight abuse of notation, also denote by $\rho_n$ the induced map from $\plac{n}$ to $\utst$. In particular we sometimes write
$\rho_n(T)$ where $T$ is a tableau over $[n]$.

\begin{lemma}\label{lemma_nodescending}
If $w \in [n]^*$ contains a strictly descending scattered subword of length $k+1$ or more, then $w$ cannot be read from $[k]$ to $\bark_n$.
\end{lemma}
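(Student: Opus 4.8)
The plan is to argue by contradiction, extracting from an attempted reading a strictly decreasing sequence of ``row numbers'' that is too long to fit in $[k]$. Suppose $w$ contains a strictly descending scattered subword; by passing to a sub-subword we may assume it has length exactly $k+1$, occurring at positions $i_1 < \cdots < i_{k+1}$ with $w_{i_1} > \cdots > w_{i_{k+1}}$, and suppose for contradiction that $w$ can be read from $[k]$ to $\bark_n$, witnessed by a chain $[k] \leq S_1 \leq \cdots \leq S_{|w|} \leq \bark_n$ with $w_j \in S_j$ for all $j$. The first step is to note that every $S_j$ has cardinality exactly $k$: since $S \leq T$ forces $|S| \geq |T|$, the cardinalities are non-increasing along the chain, while $[k] \leq S_1$ gives $|S_1| \leq k$ and $S_{|w|} \leq \bark_n$ gives $|S_{|w|}| \geq k$. (This is the only use of the specific endpoints: any $P \leq Q$ with $|P| = |Q| = k$ would do just as well.)

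Next I would restrict attention to the positions of the descending subword: writing $T_j = S_{i_j}$ and $a_j = w_{i_j}$, we have $T_1 \leq \cdots \leq T_{k+1}$, each of cardinality $k$, with $a_j \in T_j$ and $a_1 > \cdots > a_{k+1}$. Let $r_j \in [k]$ be the row number of $a_j$ in $T_j$, so $a_j = T_j^{r_j}$. The key claim is that $r_1 > r_2 > \cdots > r_{k+1}$: if instead $r_j \leq r_{j+1}$ for some $j$, then, using first that within a fixed set the $p$th smallest element is non-decreasing in $p$, and then that $T_j \leq T_{j+1}$ (so $T_j^p \leq T_{j+1}^p$ for every $p \in [k]$), we obtain $a_{j+1} = T_{j+1}^{r_{j+1}} \geq T_{j+1}^{r_j} \geq T_j^{r_j} = a_j$, contradicting $a_j > a_{j+1}$. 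But then $r_1 > \cdots > r_{k+1}$ is a strictly decreasing sequence of $k+1$ integers inside the $k$-element set $[k]$, which is impossible; hence no such chain exists and $w$ cannot be read from $[k]$ to $\bark_n$.

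The proof is short and I anticipate no genuine obstacle. The only point needing a little care is the initial cardinality bookkeeping — recognising that the order $\leq$ reverses cardinality and that the two named endpoints pin every intermediate set to cardinality exactly $k$ — after which the staircase-of-row-numbers argument together with a pigeonhole count finishes it off.
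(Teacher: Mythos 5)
Your proof is correct and follows essentially the same route as the paper's: pass to the descending subword, assign each letter its row number in the witnessing set, show these row numbers strictly decrease using the order relation $T_j \leq T_{j+1}$, and conclude by pigeonhole in $[k]$. The only cosmetic difference is that you pin every intermediate set to cardinality exactly $k$, whereas the paper only implicitly uses $|S_i| \leq k$ to place the row numbers in $[k]$; both are fine.
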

\begin{proof}
Suppose for a contradiction that $w$ can be read from $[k]$ to $\bark_n$ and contains a strictly descending scattered subword of length $k+1$ or more. By replacing $w$ with a scattered subword, we may assume that $w$ is actually equal to a strictly descending word of length $k+1$ which can be read
from $[k]$ to $\bark_n$. Then by definition there exists an ordered sequence of sets
$$[k] \leq S_1 \leq S_2 \leq \dots \leq S_{|w|} \leq \bark_n$$
such that $w_i \in S_i$ for each $i$. For each $i$, let $r_i \in [k]$ be the row number of $w_i$ in $S_i$. The fact that
$S_i \leq S_{i+1}$ for $1 \leq i < |w|$ means every letter in row $r_i$ or above in $S_{i+1}$ is greater than or equal to $w_i$. Since
$w$ is strictly descending we must therefore have $r_{i+1} < r_i$. But now $r_1, r_2, \dots, r_{k+1}$ is a strictly descending sequence of length
$k+1$ in the set $[k]$, giving the required contradiction.
\end{proof}

To prove the following lemma we shall need a new definition.
Let $k \leq m \leq n$. If $S\subseteq [m]$ is a set of cardinality $k$ or less we define the \textit{$(k,m)$-completion of $S$} to be the set of cardinality $k$ obtained by adding
in the $k-|S|$ largest values from $[m] \setminus S$. Notice that if $\hat{S}$ is the $(k,m)$-completion of $S$ then we have $\hat{S} \subseteq [m]$ and for all $i \in [k]$, at least one of the following holds:
\begin{itemize}
\item $\hat{S}^i=m-k+i \leq S_i$; or
\item $i \leq |S|$ and $\hat{S^i} = S^i \leq m-k+i$.
\end{itemize}
In particular $S^i \geq \hat{S}^i$ for all $i \in [|S|]$.
(To immediately see these facts it may be helpful to think about the sets concerned as tableau columns, as described above.)

\begin{lemma}\label{lemma_kcompletion}
Suppose $S, T \in 2^{[m]}$ and let $\hat{S}$ and $\hat{T}$ be the $(k,m)$-completions of $S$ and $T$ respectively.
If $S \leq T$ then $\hat{S} \leq \hat{T}$.
\end{lemma}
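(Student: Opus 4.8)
The plan is to recast both the partial order $\leq$ and the completion operation purely in terms of the \emph{prefix counts} $c_S(j) := |S \cap [1,j]|$; once this is done the statement follows in a line.

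First I would record a reformulation of the order: for arbitrary $A, B \in 2^{[m]}$, one has $A \leq B$ if and only if $c_A(j) \geq c_B(j)$ for every $j \in [m]$. For the forward implication, if $B$ has $r \geq 1$ elements that are $\leq j$ then $B^r \leq j$ with $r \leq |B|$, so $A^r \leq B^r \leq j$ and hence $A$ also has at least $r$ elements $\leq j$. For the converse, taking $j = m$ yields $|A| \geq |B|$, and taking $j = B^i$ for each $i \leq |B|$ yields $c_A(B^i) \geq c_B(B^i) = i$, that is $A^i \leq B^i$. (This is just the ``subsets as tableau columns'' picture already used in the paper, made precise.)

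Next I would establish the key formula
$$c_{\hat{S}}(j) \ = \ \max\bigl(\,c_S(j),\ j - (m-k)\,\bigr) \qquad (j \in [m])$$
for any $S \in 2^{[m]}$ with $|S| \leq k$. This is immediate from the definition of completion: $[m] \setminus \hat{S}$ is exactly the set of the $m-k$ smallest elements of $[m] \setminus S$, and because the elements of $[m] \setminus S$ lying in $[1,j]$ form an initial segment of $[m] \setminus S$ (listed in increasing order), the number of those $m-k$ deleted elements that lie in $[1,j]$ equals $\min\bigl(m-k,\ j - c_S(j)\bigr)$; subtracting from $j$ gives the displayed formula. (The degenerate case $|S| = k$, where $\hat{S} = S$, is automatically covered, since then $c_S(j) \geq |S| - (m-j) = j - (m-k)$.)

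Finally, assume $S \leq T$. By the first step $c_S(j) \geq c_T(j)$ for all $j \in [m]$; since the quantity $j - (m-k)$ appearing in the key formula does not depend on the set, this gives $c_{\hat{S}}(j) \geq c_{\hat{T}}(j)$ for all $j \in [m]$, and as $|\hat{S}| = |\hat{T}| = k$ the first step applied in reverse yields $\hat{S} \leq \hat{T}$. The only genuine work is the key formula --- specifically, the identification of $[m] \setminus \hat{S}$ with the $m-k$ smallest non-elements of $S$ and the bookkeeping of how these meet the initial segments $[1,j]$; everything else is formal. One could instead argue more directly by splitting into cases according to which of the two alternatives listed just before the lemma holds for $\hat{S}^i$ and for $\hat{T}^i$, but the prefix-count reformulation seems to sidestep that case analysis.
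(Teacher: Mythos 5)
Your proof is correct. The paper's own argument is exactly the direct case analysis you mention at the end as the alternative you chose to sidestep: for each row $i \in [k]$ it splits on the dichotomy recorded just before the lemma, noting that either $\hat{T}^i = m-k+i$, which dominates $\hat{S}^i$ since $\hat{S}^i \leq m-k+i$ always, or else $i \leq |T| \leq |S|$ and $\hat{T}^i = T^i \geq S^i \geq \hat{S}^i$. Your route recasts everything in terms of the prefix counts $c_S(j) = |S \cap [1,j]|$: the equivalence $A \leq B$ iff $c_A(j) \geq c_B(j)$ for all $j$ is a correct (and standard) dominance-style reformulation of the order, and your key formula $c_{\hat{S}}(j) = \max(c_S(j),\, j-(m-k))$ is precisely the counting-function transpose of the elementwise identity $\hat{S}^i = \min(S^i,\, m-k+i)$ that underlies the paper's case split. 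So the two proofs carry the same mathematical content in dual coordinates. Yours trades the paper's two-line case analysis for two small auxiliary claims, but it buys a transparent reason for monotonicity (completion becomes a pointwise maximum with a set-independent function, so it visibly preserves the pointwise order), whereas the paper's version is shorter because the dichotomy has already been stated. I see no gap: the identification of $[m]\setminus\hat{S}$ with the $m-k$ smallest non-elements of $S$, the count $\min(m-k,\, j-c_S(j))$ of those lying in $[1,j]$, and the absorption of the degenerate case $|S|=k$ are all handled correctly.
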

\begin{proof}
This follows from the facts immediately above the statement of the lemma, along with the definition of the order on sets. 
Indeed, consider the entries of the various sets in row $i$ for $i \in [k]$. If $\hat{T}^i= m-k+i$, then in particular it is greater than or equal to $\hat{S}^i$. Otherwise we have $i \leq |T| \leq |S|$ (so that $T^i$ and $S^i$ are defined)
and
$$\hat{T}^i \ = \ T^i \ \geq \ S^i \ \geq \ \hat{S}^i.$$
So in all cases $\hat{T}^i \geq \hat{S}^i$, which since $|\hat{S}| = |\hat{T}|$ means that $\hat{S} \leq \hat{T}$.
\end{proof}

\begin{lemma}\label{lemma_rowcount}
Let $T \in \plac{n}$ be a tableau and $k,m \in [n]$ with $k \leq m$. Then $\rho_n(T)_{[k],[\hat{k}]_m}$ is the total number of entries from $[m]$ lying in the bottom $k$ rows of $T$.
\end{lemma}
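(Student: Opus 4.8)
The plan is to interpret everything through Lemma~\ref{lemma_scatter}, reducing the claim to a statement about scattered subwords of the column reading of $T$. Write $w$ for the column reading of $T$ (so $\rho_n(T)=\rho_n(w)$), and let $C_1\le C_2\le\cdots\le C_\ell$ be the columns of $T$, read left to right, viewed as elements of $\twon$ (note this in particular forces $|C_1|\ge|C_2|\ge\cdots\ge|C_\ell|$). For each $j$ set $s_j=|C_j\cap[m]|$; since $C_j^1<\cdots<C_j^{|C_j|}$, the entries of column $j$ lying in $[m]$ are exactly $C_j^1,\dots,C_j^{s_j}$, and inside $w$ they occur, as one reads down the column, as the strictly descending run $C_j^{s_j},\dots,C_j^1$ forming a suffix of the block of $w$ coming from column $j$. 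Counting cells shows the number of entries of $T$ from $[m]$ in the bottom $k$ rows equals $N:=\sum_{j=1}^{\ell}\min(k,s_j)$. Since $k\le m$ we have $[k]\le\bark_m$ with $|[k]|=|\bark_m|=k$, so Lemma~\ref{lemma_scatter} applies, and it remains to show that a longest scattered subword of $w$ readable from $[k]$ to $\bark_m$ has length exactly $N$. I would prove the two inequalities separately.

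For $\rho_n(T)_{[k],\bark_m}\ge N$: for each $j$ put $r_j=\min(k,s_j)$ and $B_j=\{C_j^1,\dots,C_j^{r_j}\}\subseteq[m]$, the set of entries from $[m]$ in the bottom $k$ rows of column $j$, so $|B_j|=r_j\le k$; let $\hat{B}_j$ be the $(k,m)$-completion of $B_j$, so $|\hat{B}_j|=k$ and $B_j\subseteq\hat{B}_j\subseteq[m]$. Since $C_j\le C_{j+1}$ gives $C_j^i\le C_{j+1}^i$ for all $i\le|C_{j+1}|$, the sequence $(s_j)$ is weakly decreasing (so the columns with $s_j\ge1$ form a prefix, and we may discard the rest), and one checks directly that $B_j\le B_{j+1}$ for each $j$ (namely $r_j\ge r_{j+1}$, and $B_j^i=C_j^i\le C_{j+1}^i=B_{j+1}^i$ for $i\le r_{j+1}$). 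Lemma~\ref{lemma_kcompletion} then yields $\hat{B}_1\le\hat{B}_2\le\cdots\le\hat{B}_\ell$, and since $[k]$ and $\bark_m$ are respectively the least and greatest cardinality-$k$ subsets of $[m]$ we also get $[k]\le\hat{B}_1$ and $\hat{B}_\ell\le\bark_m$. Now let $v$ be the scattered subword of $w$ obtained by concatenating, for $j=1,\dots,\ell$, the length-$r_j$ suffix $C_j^{r_j},\dots,C_j^1$ of the $j$th column-block of $w$: reading the $r_j$ letters coming from column $j$ all at the single set $\hat{B}_j$ (which contains $B_j$) exhibits $v$ as readable from $[k]$ to $\bark_m$ along the chain $[k]\le\hat{B}_1\le\cdots\le\hat{B}_\ell\le\bark_m$. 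As $|v|=\sum_j r_j=N$, this gives the lower bound.

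For $\rho_n(T)_{[k],\bark_m}\le N$: let $v$ be any scattered subword of $w$ readable from $[k]$ to $\bark_m$. Every set $S$ occurring in a witnessing chain satisfies $[k]\le S\le\bark_m$, hence $|S|=k$ and $S^i\le m-k+i\le m$ for all $i$, so $S\subseteq[m]$; thus every letter of $v$ lies in $[m]$. Also $\bark_m\le\bark_n$ because $m\le n$, so $v$ is readable from $[k]$ to $\bark_n$, and Lemma~\ref{lemma_nodescending} forbids $v$ from containing a strictly descending scattered subword of length $k+1$. But any positions of $v$ lying in a single column-block of $w$ carry strictly decreasing values, hence form a strictly descending scattered subword of $v$, so $v$ uses at most $k$ letters from that block; and since all letters of $v$ lie in $[m]$, $v$ uses only the $s_j$ entries of column $j$ that lie in $[m]$, so in fact at most $\min(k,s_j)$ letters from column $j$. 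Summing over $j$ gives $|v|\le N$, completing the proof.

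I expect the lower bound to be the only genuine obstacle. The point is that readability from $[k]$ to $\bark_m$ forces every intermediate set to have cardinality exactly $k$ and to be contained in $[m]$, so the natural candidate sets $B_j$ (which may be strictly smaller than $k$) cannot be used directly; the role of the $(k,m)$-completion and its monotonicity (Lemma~\ref{lemma_kcompletion}) is precisely to repair this, and the one computational step is deducing $B_j\le B_{j+1}$ — in particular the cardinality inequality $r_j\ge r_{j+1}$ — from $C_j\le C_{j+1}$. The upper bound, by contrast, is short once Lemma~\ref{lemma_nodescending} is available.
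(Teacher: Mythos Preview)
Your proof is correct and follows essentially the same route as the paper's: reduce via Lemma~\ref{lemma_scatter} to scattered subwords of the column reading, build the witness subword from the entries of $[m]$ in the bottom $k$ rows using $(k,m)$-completions and Lemma~\ref{lemma_kcompletion} for the lower bound, and use Lemma~\ref{lemma_nodescending} together with the fact that intermediate sets are contained in $[m]$ for the upper bound. Your per-column bookkeeping (with the sets $B_j$ and the explicit bound $\min(k,s_j)$) is slightly more explicit than the paper's per-letter version, but the arguments are the same in substance.
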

\begin{proof}
Let $w \in [n]^*$ be the column reading of $T$. Then by Lemma \ref{lemma_morphism} $\rho_n(T) = \rho_n(w)$, and by
Lemma~\ref{lemma_scatter}, $\rho_n(T)_{[k],[\hat{k}]_m} = \rho_n(w)_{[k],[\hat{k}]_m}$ is the maximum length of a scattered subword of $w$ that can be read from $[k]$ to $[\hat{k}]_m$. Notice that if $[k] \leq P \leq [\hat{k}]_m$, then $P \subseteq [m]$. Thus it suffices to consider scattered subwords of $w$ which use only the letters from $[m]$.

First, let $v$ be the scattered subword of $w$ consisting of those letters in $[m]$ coming from the bottom $k$ rows of the tableau. For each $i$ let $S_i$ be the set of letters from $[m]$ in the bottom $k$ rows of the column of the tableau from which the letter $v_i$ originates. The $v$ comes from the column reading of $T$, each $S_i$ is either equal to $S_{i+1}$ or appears to the left of $S_{i+1}$ in some tableau, so we have $S_i \leq S_{i+1}$ for
all $i$. Let $T_i$ be the
$(k,m)$-completion of $S_i$. Then by Lemma~\ref{lemma_kcompletion}, we have $T_i \leq T_{i+1}$ for all $i$.
Since every subset of $[m]$ of cardinality $k$ is above $[k]$ and below $[\hat{k}]_m$ we have
$$[k] \leq T_1 \leq \dots \leq T_{|v|} \leq [\hat{k}]_m$$
and $v_i \in S_i \subseteq T_i$ for each $i$, which shows that $v$ can be read from $[k]$ to $[\hat{k}]_m$. Thus $\rho_n(s)_{[k],[\hat{k}]_m}$ is greater than or equal to the total number of entries from $[m]$ lying in the bottom $k$ rows of the tableau representation of $s$.

Next note that any scattered subword of $w$ using only the letters of $m$ that is strictly longer than $v$ must clearly contain $k+1$ or more letters from some column in the tableau, and hence must contain a strictly descending scattered subword of length $k+1$. But by Lemma~\ref{lemma_nodescending} this means this scattered subword cannot be read from $[k]$ to $\bark_n$. Since $[k] \leq \bark_m \leq \bark_n$
this means it cannot be read from $[k]$ to $[\hat{k}]_m$, giving a contradiction.

We have shown that $v$ is a scattered subword of maximal length that can be read from $[k]$ to $[\hat{k}]_m$, which establishes the claim.
\end{proof}

\begin{theorem}\label{thm_main}
The map $\rho_n : \plac{n} \to \utst$ is a faithful representation of $\plac{n}$.
\end{theorem}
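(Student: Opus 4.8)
The plan is to deduce the theorem directly from the lemmas already established. Lemma~\ref{lemma_morphism} shows that $\rho_n$ induces a well-defined morphism $\plac{n} \to \utst$, so the only thing left to prove is that this induced morphism is injective. Identifying elements of $\plac{n}$ with tableaux over $[n]$ as in Section~\ref{sec_prelim}, it therefore suffices to show that an arbitrary tableau $T$ can be recovered from the matrix $\rho_n(T)$; in fact we shall only need the entries $\rho_n(T)_{[k],\bark_m}$ with $1 \le k \le m \le n$.

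By Lemma~\ref{lemma_rowcount}, the entry $\rho_n(T)_{[k],\bark_m}$ equals the number of boxes of $T$ whose entry is at most $m$ and which lie in one of the bottom $k$ rows. Write $g_T(k,m)$ for this quantity, extended to all $0 \le k,m \le n$ by the conventions $g_T(0,m)=g_T(k,0)=0$. The first step is to observe that $g_T(k,m)$ is actually determined by $\rho_n(T)$ for \emph{all} pairs $(k,m)$, not merely those with $k \le m$: since any box lying in row $r$ of a tableau has entry at least $r$ (it sits at the top of a strictly decreasing column of height at least $r$), the rows $m+1, m+2, \dots$ contribute no box with entry $\le m$, and hence $g_T(k,m) = g_T(m,m) = \rho_n(T)_{[m],\bark_m}$ whenever $k > m \ge 1$.

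The second step is a routine inclusion--exclusion: the number of boxes of $T$ lying in row $k$ with entry exactly equal to $v$ is
$$
g_T(k,v) - g_T(k-1,v) - g_T(k,v-1) + g_T(k-1,v-1),
$$
which by the previous paragraph is determined by $\rho_n(T)$. Hence the multiset of entries occurring in each individual row of $T$ is determined by $\rho_n(T)$. Since by the definition of a tableau the entries along each row appear in weakly increasing order, the content of a row determines its filling, and so $\rho_n(T)$ determines $T$ completely. Thus the induced map $\plac{n} \to \utst$ is injective, which together with Lemma~\ref{lemma_morphism} establishes the theorem.

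I do not expect any substantial difficulty once Lemmas~\ref{lemma_morphism} and~\ref{lemma_rowcount} are in hand; the only points requiring a little care are the bookkeeping at the boundary --- the degenerate conventions for $g_T$, and the case $k=v$, where $\rho_n(T)_{[k],\bark_{v-1}}$ is not directly supplied by Lemma~\ref{lemma_rowcount} and one instead uses $g_T(k,v-1)=g_T(v-1,v-1)$ --- together with the essentially tautological fact that two tableaux with identical row contents coincide.
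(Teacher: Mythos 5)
Your proposal is correct and follows essentially the same route as the paper: well-definedness from Lemma~\ref{lemma_morphism}, then injectivity by recovering the number of occurrences of each symbol in each row via inclusion--exclusion applied to the quantities supplied by Lemma~\ref{lemma_rowcount}. Your explicit handling of the boundary case $k>m$ (using the fact that a box in row $r$ has entry at least $r$) is in fact slightly more careful than the paper's treatment, which dismisses the degenerate terms with a brief remark.
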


\begin{proof}
The fact that the given map is a well-defined morphism is Lemma~\ref{lemma_morphism}. To see that it is faithful, we observe that any tableau $T \in \plac{n}$ can be reconstructed its image under this morphism. It will suffice to recover for each $m \in [n]$ and $k \in [n]$, the number of occurrences of the symbol $m$ in row $k$ of the tableau. By a simple inclusion-exclusion argument, this number is the number of entries from $[m]$ in the bottom $k$ rows, minus the number of entries from $[m]$ in the bottom $k-1$ rows, minus the number of entries from $[m-1]$ in the bottom $k$ rows, plus the number of entries from $[m-1]$ in the bottom $k-1$ rows. Each of these four values is either immediately seen to be $0$, or else by Lemma~\ref{lemma_rowcount} can be seen in the matrix $\rho_n(T)$.
\end{proof}

\begin{remark}\label{remark_singletonblockcain} The block of our representation $\rho_n$ corresponding to singleton sets is essentially the same as one of the blocks in the (non-faithful, for $n \geq 4$) representation given by Cain \textit{et al} \cite{Cain17}. 
Indeed, if $P = \{p\}$ and $Q =\{q\}$ are singleton sets with $p \leq q$, and $w \in [n]^+$ then it follows easily from Lemma~\ref{lemma_scatter} that
$\rho_n(w)_{P,Q}$ is the maximum length of a non-decreasing scattered subword of $w$ whose entries lie in the range $[p,q]$, which is (by definition)
the value of $\phi_n(w)$ in \cite{Cain17}. We shall use this description of $\rho_n(w)_{P,Q}$ in Section~\ref{sec_plactictotropical} below. The
only difference between these two representations (apart from the formal difference in row and column labels) is that $\phi_n(\epsilon)$ is defined in \cite{Cain17} to be the identity matrix of $UT_n(\trop)$. That approach has the advantage of yielding a morphism of monoids, but at the price of what in our view is a more artificial definition which more often necessitates treating the
empty word as a special case. (For example, Lemma~\ref{lemma_scatter} above would not hold as stated if the empty word were mapped to the identity matrix.)
\end{remark}

\begin{remark}\label{remark_izhakiancain}
It can be seen from the proof of \cite[Lemma 7.6]{Izhakian19} that the representation $\mathscr{C}_{\rm mat}$ defined by Izhakian also acts on the generators in the same way as the singleton set block of our $\rho_n$ and the map $\phi_n$ of \cite{Cain17}, and thus also defines essentially the same representation. (The identity element is sent to the same element as in our representation, rather than to the identity element
of $UT_n(\trop)$ as in \cite{Cain17}.) 
\end{remark}

\begin{remark}\label{remark_bigblockcain}
Similarly, the block of our representation corresponding to sets of cardinality $n-1$ is closely related to the representation $\sigma_n$ in \cite{Cain17}, and is also closely related to the other block of the representation of \cite{Izhakian19}. We do not have such a simple conceptual explanation for this, but straightforward calculations show that each generator of $\plac{n}$ is sent to the same matrix by the $(n-1)$-block of our representation as by the representation $\sigma_n$, so the two representations differ only in where they send the identity element. Specifically, generator $i$ is sent in both cases to the matrix with $0$ in the $(n+1-i)$th diagonal position
and $1$ in all other positions on or above the diagonal. The representation $\mathscr{C}_{\rm mat}^{\rm co}$ of Izhakian \cite{Izhakian19} sends $i$ to the matrix with $-1$ in the $(n+1-i)$th diagonal position and $0$ in all other positions on or above the diagonal, that is, the same matrix tropically scaled by $-1$. Thus for every $w \in [n]^+$ we have $\mathscr{C}_{\rm mat}^{\rm co}(w) = (-|w|) \otimes \sigma_n(w)$. Since tropical scaling is injective, and since two words of different lengths never represent the same element of the plactic monoid, it follows that the representation $\mathscr{C}_{\rm mat}^{\rm co}$ carries the same information as the other two (and hence that the full representations constructed in \cite{Izhakian19} and \cite{Cain17} carry exactly the same information as each other).
\end{remark}

\begin{remark}\label{remark_cainnotfaithful}
In rank $4$, a simple calculation shows that the blocks of our representation corresponding to sets of size $1$ and $3$ do not distinguish between,
for example, the elements
$$\ytableausetup{centertableaux}
\begin{ytableau}
 4 & 4 \\
 2 & 3 & 4 \\
 1 & 2 & 3 & 3
\end{ytableau} \ \ \ \ \ \textrm{ and } \ \ \ \ \ \begin{ytableau}
 4 \\
 2 & 3 & 4 & 4\\
 1 & 2 & 3 & 3
\end{ytableau}.
$$
Rather, these are distinguished in the block corresponding to sets of size $2$, specifically in the $(\lbrace 1,2 \rbrace , \lbrace 3,4 \rbrace)$ entry, which by Lemma~\ref{lemma_rowcount} is $7$ for the left-hand tableau and $8$ for the right-hand tableau.
It follows from this and Remarks~\ref{remark_singletonblockcain}, \ref{remark_izhakiancain} and \ref{remark_bigblockcain} that these elements of $\plac{4}$ are not
distinguished by the representations of Izhakian \cite{Izhakian19} or of Cain \textit{et al} \cite{Cain17}. (A similar example is given by Izhakian
to show that his representation is not faithful \cite[example before Theorem 7.18]{Izhakian19}.)
\end{remark}

\begin{remark}\label{remark_dimension}
In the interests of a ``clean'' proof of Theorem~\ref{thm_main}, we have not attempted
to optimise the dimension of our representation for each $\plac{n}$. The dimension of the representation as given is $2^n$,
but it can be shown that certain blocks (in particular, those corresponding to the empty set and the set $[n]$) of the representations are redundant and the dimension can therefore be slightly reduced. However, we believe that any such pruning of our representation will yield something of the order $2^n$,
and we conjecture that it is not possible to improve significantly upon this:
\end{remark}
\begin{conjecture}
Any family of faithful tropical representations for $\plac{n}$ has dimension which grows exponentially, with base at least $2$, as a function of $n$.
\end{conjecture}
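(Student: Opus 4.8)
The plan is to bound below the dimension $d = d(n)$ of any faithful representation $\phi \colon \plac{n} \to M_d(\trop)$. The most natural first attempt is a growth comparison. Since the Knuth relations preserve length, $\plac{n}$ is graded by word length, and its sphere of radius $N$ is exactly the set of tableaux over $[n]$ with $N$ boxes; specialising the Littlewood identity $\sum_\lambda s_\lambda(x_1,\dots,x_n) = \prod_i (1-x_i)^{-1}\prod_{i<j}(1-x_ix_j)^{-1}$ at $x_1 = \cdots = x_n = q$ gives the generating function $(1-q)^{-n}(1-q^2)^{-\binom{n}{2}}$, so this sphere has size $\Theta_n\!\big(N^{\,n+\binom{n}{2}-1}\big)$ and $\plac{n}$ has polynomial growth of degree $\Theta(n^2)$. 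On the other side one bounds the number of matrices $\phi(w)$ with $|w|\le N$: each of the $d^2$ entries of such a product is a finite maximum of sums of at most $N$ of the (finitely many) entries appearing in the fixed generator matrices $\phi(1),\dots,\phi(n)$, so one expects the number of distinct such matrices to grow only polynomially in $N$, of degree $O(d^2)$. Comparing the two polynomial degrees yields at best the linear bound $d = \Omega(n)$ — hopelessly far from the conjectured $2^n$.

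To reach an exponential bound one must exploit the feature that already separates $\plac{4}$ from $\plac{3}$ in our construction: the partial order on $2^{[n]}$ has antichains of size $\binom{n}{\lfloor n/2\rfloor} = \Theta(2^n/\sqrt n)$, and incomparable columns are precisely what forces the $-\infty$ entries above the diagonal in the proof of Theorem~\ref{thm_main}. The approach I would pursue is to isolate inside $\plac{n}$ a family of $\Theta(2^n)$ elements — the one-column tableaux $c_S$ for $S \subseteq [n]$ together with short products built from them — whose mutual relations (in the $\J$-order, or in the natural order on tropical matrices obtained by comparing row/column spaces) faithfully encode this antichain, and then to prove a dimension bound for finitely generated subsemigroups of $M_d(\trop)$ in terms of the size of the largest antichain they realise in such an order. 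A cleaner intermediate goal is the upper-triangular case: combine the chain-decomposition description of $UT_d(\trop)$ from \cite{Daviaud18} with the observation that $\plac{n}$ contains, for each $S \subseteq [n]$, a submonoid behaving like a bicyclic monoid ``attached to the column $S$'', and argue that copies attached to incomparable $S$ cannot be realised along a single chain, so that $d$ must exceed the antichain number.

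The hard part will be exactly this last implication. In contrast to the situation over a field — where linear-algebraic and Lie-theoretic invariants provide robust lower bounds on the dimension of faithful representations — there is at present no general method for proving super-polynomial lower bounds on tropical representation dimension; the growth comparison above is intrinsically polynomial, and converting ``$\plac{n}$ contains an exponentially large antichain of a suitable kind'' into ``$d$ is exponentially large'' requires a quantitative understanding of how much incomparability a single $d \times d$ tropical matrix semigroup can support. This is precisely where the question remains open, which is why the statement is recorded here only as a conjecture.
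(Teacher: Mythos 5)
The statement you are addressing is recorded in the paper only as a conjecture: the authors give no proof, and there is therefore nothing to compare your argument against. Your proposal is, by your own account, not a proof either, so the honest verdict is that you have correctly diagnosed the status of the statement rather than established it. Within that frame, your analysis is sound in spirit. The growth comparison is the right first thing to try and correctly shows its own inadequacy: $\plac{n}$ has polynomial growth of degree $n+\binom{n}{2}-1$ (your Littlewood-identity specialisation is correct), while the image of a ball of radius $N$ under any morphism to $M_d(\trop)$ with integer (or fixed finite-entry-set) generators has size polynomial in $N$, so comparing degrees can only ever give a polynomial lower bound on $d$. One bookkeeping caveat: the degree of the polynomial bounding the image size depends not just on $d^2$ but also on the number of distinct entries occurring in the generator matrices, which may itself be as large as $nd^2$; this makes the resulting bound on $d$ even weaker than the $\Omega(n)$ you state, but does not change the conclusion that the method is hopeless for an exponential bound. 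Your identification of the Sperner antichain $\binom{n}{\lfloor n/2\rfloor}$ in $(2^{[n]},\leq)$ as the structural source of the conjectured exponential growth is consistent with the paper's own discussion (the incomparable pairs such as $\{1,4\}$ and $\{2,3\}$ in Example~\ref{example_matrix} are exactly what defeats the lower-dimensional representations of rank $\geq 4$).

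The genuine gap is the one you name in your final paragraph, and it is worth being precise about why it is a gap and not merely a technicality. Even granting that $\plac{n}$ contains an exponentially large family of one-column elements $c_S$ realising a large antichain in some intrinsic order (the $\J$-order, say), there is no known result of the form ``a subsemigroup of $M_d(\trop)$ cannot contain an antichain of size $f(d)$ of such-and-such a type,'' and it is not even clear what the correct order-theoretic invariant of a tropical matrix semigroup should be: row and column spaces of $d\times d$ tropical matrices can have complicated combinatorial types, and $\J$-classes in $M_d(\trop)$ are already large enough that naive counting fails. Until such a quantitative ``incomparability capacity'' bound for $M_d(\trop)$ is proved, the implication from antichain size to dimension is missing entirely, and the statement remains a conjecture. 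Your write-up would be improved by stating explicitly at the outset that you are giving an obstruction analysis rather than a proof, so that the reader does not expect the final implication to be closed.
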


\section{From Tropical to Plactic Identities}\label{sec_tropicaltoplactic}

Since it is known that upper triangular tropical matrix semigroups of each rank
satisfy non-trivial semigroup identities \cite{Izhakian14,Okninski15,Taylor17}, Theorem~\ref{thm_main} has the following
immediate corollary, which establishes a conjecture of Kubat and Okni\'{n}ski \cite{Kubat15}:
\begin{theorem}
The plactic monoid of each finite rank satisfies a non-trivial semigroup
identity.
\end{theorem}
A proof of this statement was also proposed in a recent preprint of Okni\'{n}ski \cite{Okninski19}, but the
preprint has been retracted due to an incorrect proof.

We remarked above (Remark~\ref{remark_dimension}) that we have not attempted to optimise the dimension of our representations.
For some purposes (in particular, the study of identities) the dimension of an upper triangular representation is
not the most important parameter: rather, what matters is the ``chain
length'' in the sense of the following definition introduced by Daviaud and the present authors \cite{Daviaud18}.

Let $\Gamma$ be a finite partially ordered set and let $N$ be the least upper bound on the length of ascending chains
in $\Gamma$. Let
$$\Gamma(\mathbb{T}) = \{A \in M_{\Gamma}(\trop) \mid A_{P,Q} \neq -\infty \implies P \leq Q\}.$$
Then $\Gamma(\mathbb{T})$ is a subsemigroup of $M_{\Gamma}(\trop)$, called a 
\emph{chain-structured tropical matrix semigroup}\footnote{In fact this is a simplification of the definition in \cite{Daviaud18},
since the latter allows $\Gamma$ to be infinite, which introduces some additional complications to be considered in the definition; here we shall need only the
case where $\Gamma$ is finite.} of \emph{chain length} $N$.

\begin{theorem}
For each $n$, $\plac{n}$ embeds in a chain-structured
tropical matrix semigroup of chain length the integer
part of $\frac{n^2}{4}+1$.
\end{theorem}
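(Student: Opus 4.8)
The plan is to apply the faithful representation $\rho_n$ of Theorem~\ref{thm_main} essentially as it stands, after re-ordering its index set so that the ambient semigroup becomes chain-structured of the required chain length. The starting observation is that $\rho_n(w)_{P,Q} \neq -\infty$ holds precisely when $|P| = |Q|$ and $P \leq Q$; in particular no non-$(-\infty)$ entry of any matrix $\rho_n(w)$ links two subsets of different cardinality, so we are free to discard all the order relations of $\leq$ between subsets of different sizes. Concretely, I would equip $2^{[n]}$ with the partial order $\preceq$ defined by $S \preceq T$ if and only if $|S| = |T|$ and $S^i \leq T^i$ for all $i \in [\,|S|\,]$, set $\Gamma = (2^{[n]}, \preceq)$, and note that $\rho_n(w)_{P,Q} \neq -\infty$ holds exactly when $P \preceq Q$. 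Then $\rho_n$ maps $\plac{n}$ into the chain-structured tropical matrix semigroup $\Gamma(\trop)$, and by Theorem~\ref{thm_main} this map is an embedding.

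It then remains to compute the chain length of $\Gamma$, that is, the least upper bound on the lengths (number of elements) of ascending chains in $\Gamma$. Since two subsets of distinct cardinality are $\preceq$-incomparable, $\Gamma$ is the disjoint union of the sub-posets $L_k$ (for $0 \leq k \leq n$) consisting of the $k$-element subsets of $[n]$ ordered by $\preceq$; hence every ascending chain of $\Gamma$ is contained in a single $L_k$, and the chain length of $\Gamma$ equals $\max_{0 \leq k \leq n} c_k$, where $c_k$ denotes the chain length of $L_k$.

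Next I would show that $c_k = k(n-k) + 1$. For the upper bound, observe that along any ascending chain in $L_k$ the integer $\sigma(S) := \sum_{i=1}^{k} S^i$ strictly increases, while $i \leq S^i \leq n-k+i$ for every $k$-subset $S$, so $\sigma$ takes its values in an interval of exactly $k(n-k)+1$ integers. For the matching lower bound one may use the standard ``staircase'' bijection $S \mapsto (S^1 - 1, S^2 - 2, \dots, S^k - k)$, which is an isomorphism of $L_k$ with the poset of weakly increasing sequences $0 \leq a_1 \leq \cdots \leq a_k \leq n-k$ under the componentwise order; inside the latter one builds an ascending chain of length $k(n-k)+1$ from $(0, \dots, 0)$ to $(n-k, \dots, n-k)$ by repeatedly raising by $1$ the last coordinate that is still below $n-k$ (which keeps the sequence weakly increasing). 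A short arithmetic check then gives $\max_{0 \leq k \leq n}\bigl(k(n-k)+1\bigr) = \bigl\lfloor \tfrac{n^2}{4} \bigr\rfloor + 1$, attained at $k = \lfloor n/2 \rfloor$, and this value is precisely the integer part of $\tfrac{n^2}{4} + 1$.

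The step demanding the most care, and the one I regard as the crux, is the passage to the partial order $\preceq$: recognising that faithfulness of $\rho_n$ forces us to retain only the within-cardinality relations of $\leq$, not the cross-cardinality ones. Keeping the original order $\leq$ would allow chains that descend through ever smaller subsets and are far longer than $\lfloor n^2/4\rfloor + 1$; only after passing to $\preceq$ does the problem reduce to the elementary combinatorial fact about the longest chain of $k$-subsets of $[n]$ in the componentwise order, which is the second, and easier, point to handle carefully. Everything else is routine bookkeeping.
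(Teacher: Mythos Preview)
Your proposal is correct and follows essentially the same approach as the paper: both define the refined partial order $\preceq$ on $2^{[n]}$ by restricting $\leq$ to pairs of equal cardinality, observe that $\rho_n$ then lands in $\Gamma(\trop)$, and compute the chain length of each cardinality-$k$ level as $k(n-k)+1$ via the strictly increasing sum $\sigma(S)=\sum_i S^i$. The only cosmetic difference is your explicit staircase construction for the lower bound, where the paper simply remarks that one can interpolate between sets whose sums differ by more than one.
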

\begin{proof}
Let $\Gamma$ be the partial order with underlying set $\twon$ and $\preceq$
given by $S \preceq T$ if and only if $|S| = |T|$ and $S \leq T$ in the order previously defined.
Since this is exactly the condition for the entries in the generators
of our representation to be different from $-\infty$, the obvious map from our representation
to the chain-structured semigroup $\Gamma(\trop)$ is an embedding.

Clearly the chain length is the length of the longest chain  with respect to the order $\leq$ of the sets
of the same size. If $|S| = |T| = k$ then it is easy to see that if $S \leq T$ then
the sum of the entries in $T$ (viewed simply as integers) strictly
exceeds that in $S$. Moreover, if these sums differ by more than $1$ then one can interpolate
between $S$ and $T$ with an ordered sequence of sets where the sum of
the entries in successive sets differs by $1$.

The minimum and maximum values of this sum are clearly
attained for $[k]$ and $\bark_n$ respectively, and the difference between
these values is $k(n-k)$, so the length of the longest possible chain of
sets of cardinality $k$ is $k(n-k)+1$, the maximum possible
value of which is $\frac{n^2}{4}+1$ or $\frac{(n+1)(n-1)}{4}+1$ depending on whether
$n$ is even or odd. Both of these are the integer part of $\frac{n^2}{4}+1$.
\end{proof}

Daviaud and the present authors showed \cite[Theorem~5.3]{Daviaud18} that chain-structured
tropical matrix semigroups of chain length $n$ satisfy the
same identities as $UT_n(\trop)$. Thus, although our faithful representations of $\plac{n}$ requires
matrices of dimension exponential in $n$, we can deduce from them $\plac{n}$ satisfies identities satisfied
by upper triangular tropical matrices of dimension only quadratic in $n$.
\begin{corollary}\label{cor_identities}
For each $n$, $\plac{n}$ satisfies all semigroup identities satisfied by $UT_d(\trop)$
where $d$ is the integer part of $\frac{n^2}{4}+1$.
\end{corollary}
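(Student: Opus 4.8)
The plan is to combine the faithful representation from Theorem~\ref{thm_main} with the quantitative bound from the preceding theorem and the transfer result quoted from \cite{Daviaud18}. This is genuinely a two-line argument, so the ``proof'' is really a chain of citations assembled in the right order.

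First I would observe that by Theorem~\ref{thm_main} the map $\rho_n$ is a faithful representation, so $\plac{n}$ embeds in $\utst$; consequently $\plac{n}$ satisfies every semigroup identity satisfied by $\utst$, and more to the point every identity satisfied by any semigroup in which $\plac{n}$ embeds. By the immediately preceding theorem, $\plac{n}$ embeds in a chain-structured tropical matrix semigroup $\Gamma(\trop)$ of chain length $d$, where $d$ is the integer part of $\frac{n^2}{4}+1$. Hence $\plac{n}$ satisfies every identity satisfied by $\Gamma(\trop)$. Finally, by \cite[Theorem~5.3]{Daviaud18}, a chain-structured tropical matrix semigroup of chain length $d$ satisfies exactly the same identities as $UT_d(\trop)$; in particular $\Gamma(\trop)$ satisfies every identity of $UT_d(\trop)$. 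Stringing these together: every identity of $UT_d(\trop)$ holds in $\Gamma(\trop)$, hence in the subsemigroup (image of) $\plac{n}$, which is what is claimed.

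There is essentially no obstacle here, since all the real work has been done in the earlier sections: the faithfulness of $\rho_n$ (the long argument culminating in Theorem~\ref{thm_main}), the chain-length computation $k(n-k)+1 \le \frac{n^2}{4}+1$, and the identity-transfer theorem imported from \cite{Daviaud18}. The only thing to be careful about is the elementary but worth-stating fact that identities are inherited by subsemigroups (an identity $u = v$ holding in $T$ holds in any $S \le T$ because any substitution of elements of $S$ is in particular a substitution of elements of $T$), so that the embedding of $\plac{n}$ into $\Gamma(\trop)$ really does pull back identities. I would state the proof in a couple of sentences:

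\begin{proof}
By the preceding theorem, $\plac{n}$ embeds in a chain-structured tropical matrix semigroup of chain length $d$, the integer part of $\frac{n^2}{4}+1$. By \cite[Theorem~5.3]{Daviaud18}, every such semigroup satisfies all identities satisfied by $UT_d(\trop)$. Since identities are inherited by subsemigroups, $\plac{n}$ satisfies all identities satisfied by $UT_d(\trop)$.
\end{proof}
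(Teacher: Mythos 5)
Your argument is exactly the paper's: the corollary is deduced from the preceding embedding theorem together with \cite[Theorem~5.3]{Daviaud18} and the fact that identities pass to subsemigroups. Correct, and the same approach.
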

In the particular case that $n=3$, we recover Izhakian's result \cite[Corollary 7.19]{Izhakian19} that $\plac{3}$ satisfies all identities satisfied by $UT_3(\trop)$; we shall see below (in Corollary~\ref{cor_rank3varieties}) that in this case the converse also holds.

As remarked in the introduction, Cain \textit{et al} \cite[Proposition 3.1]{Cain17} have shown that no single identity is satisfied by $\plac{n}$ for all $n$. Combining this with Corollary~\ref{cor_identities} (or directly with Theorem~\ref{thm_main}) we deduce the following statement which, although perhaps not surprising, has to the best of our knowledge not previously been shown:
\begin{corollary}\label{cor_utgrow}
No single semigroup identity is satisfied by $UT_n(\trop)$ for all finite $n$.
\end{corollary}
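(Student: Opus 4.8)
The plan is a short deduction by contradiction, combining Corollary~\ref{cor_identities} with the result of Cain \textit{et al} \cite[Proposition 3.1]{Cain17} that no single non-trivial semigroup identity is satisfied by $\plac{n}$ for all finite $n$.

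Concretely, I would suppose for a contradiction that some non-trivial semigroup identity $u = v$ is satisfied by $UT_m(\trop)$ for every finite $m$. Fixing an arbitrary $n$ and setting $d = \lfloor \frac{n^2}{4} + 1 \rfloor$, the identity $u = v$ holds in $UT_d(\trop)$, so by Corollary~\ref{cor_identities} it holds in $\plac{n}$. Since $n$ was arbitrary, $u = v$ is then satisfied by $\plac{n}$ for every finite $n$, contradicting \cite[Proposition 3.1]{Cain17}; hence no such identity exists. Alternatively, one can bypass Corollary~\ref{cor_identities} (and with it the chain-length results of \cite{Daviaud18}) and argue straight from Theorem~\ref{thm_main}: an identity holding in $UT_m(\trop)$ for all $m$ holds in $\utst$, which after reordering rows and columns compatibly with $\leq$ (as in Example~\ref{example_matrix}) sits inside $UT_{2^n}(\trop)$, hence in its submonoid $\rho_n(\plac{n}) \cong \plac{n}$; letting $n$ vary yields the same contradiction.

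There is no substantive obstacle here: all the mathematical content lies in the cited fact about the plactic monoid of infinite rank and in the variety bounds already established in this paper. The one point requiring care is purely logical --- \cite[Proposition 3.1]{Cain17} must be invoked in the form ``for every candidate identity there is some finite rank at which $\plac{n}$ fails it'', and one needs $d = \lfloor n^2/4 + 1 \rfloor$ (or $2^n$, in the second argument) to be unbounded in $n$, so that the failure is pushed into arbitrarily large upper triangular tropical matrix monoids.
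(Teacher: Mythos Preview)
Your proposal is correct and matches the paper's own argument essentially verbatim: the paper deduces the corollary by combining \cite[Proposition 3.1]{Cain17} with Corollary~\ref{cor_identities} (noting parenthetically that one can alternatively use Theorem~\ref{thm_main} directly), exactly as you suggest. There is nothing to add.
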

We note that Corollary~\ref{cor_utgrow} does not automatically imply that each semigroup $UT_{n+1}(\trop)$ satisfies strictly fewer identities than $UT_{n}(\trop)$, but we expect that this is the case:
\begin{conjecture}\label{conj_successiveut}
For every positive integer $n$ there is a semigroup identity satisfied by $UT_n(\trop)$ but not by $UT_{n+1}(\trop)$.
\end{conjecture}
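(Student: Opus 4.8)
The containment $\mathbf{V}(UT_n(\trop)) \subseteq \mathbf{V}(UT_{n+1}(\trop))$ is the easy half: sending $A \in UT_n(\trop)$ to the $(n+1)\times(n+1)$ matrix having $A$ in the top-left block, $0$ in position $(n+1,n+1)$ and $-\infty$ everywhere else is an embedding of semigroups (indeed of monoids), so every identity of $UT_{n+1}(\trop)$ is an identity of $UT_n(\trop)$. The conjecture is thus exactly the assertion that for each $n$ this containment is proper, equivalently that $UT_{n+1}(\trop) \notin \mathbf{V}(UT_n(\trop))$, equivalently that there is a semigroup identity holding in $UT_n(\trop)$ but failing in $UT_{n+1}(\trop)$. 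The case $n=1$ is immediate, since $UT_1(\trop) \cong (\mathbb{R}\cup\{-\infty\},+)$ is commutative while $UT_2(\trop)$ is not, so $xy=yx$ separates them. Note also that Corollary~\ref{cor_utgrow} already forces infinitely many of the inclusions $\mathbf{V}(UT_n(\trop)) \subseteq \mathbf{V}(UT_{n+1}(\trop))$ to be strict --- otherwise some $UT_N(\trop)$ would lie above every $UT_n(\trop)$ in the variety order, so its non-trivial identities would hold in all $UT_n(\trop)$, contradicting that corollary --- and the real content of the conjecture is that \emph{no} step is an equality.

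The approach I would take is to construct an explicit family of separating identities $u_n = v_n$ recursively, using the chain-length machinery of \cite{Daviaud18} to check validity. One fixes a known non-trivial identity of $UT_2(\trop)$ --- for instance Adian's identity for the bicyclic monoid --- as the seed $u_2=v_2$, and sets $u_{n+1} = \sigma_n(u_n)$, $v_{n+1} = \sigma_n(v_n)$ for carefully chosen substitutions $\sigma_n$ introducing a bounded number of fresh variables. Since $u_{n+1}=v_{n+1}$ is a substitution instance of $u_n=v_n$, one automatically has $\mathbf{V}(u_n=v_n) \subseteq \mathbf{V}(u_{n+1}=v_{n+1})$, so the family is steadily widening; the two properties to engineer are: (i) $u_n = v_n$ fails in $UT_{n+1}(\trop)$, which one obtains by transporting a substitution witnessing the failure of $u_{n-1}=v_{n-1}$ in $UT_n(\trop)$ through $\sigma_{n-1}$ and using the fresh variables to occupy one extra level of a maximal chain $1 < 2 < \cdots < n+1$; and (ii) $u_n = v_n$ holds in $UT_n(\trop)$. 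Property (ii) is the crux. By \cite[Theorem~5.3]{Daviaud18} it suffices to verify the identity in any chain-structured tropical matrix semigroup of chain length $n$, and by the description of a tropical matrix product as a maximum of weights of monotone ``staircase'' paths --- the same phenomenon that drives Lemma~\ref{lemma_scatter}, but now with arbitrary real weights rather than the $0/1$ entries of the fixed generator matrices $\rho_n(x)$ --- this reduces to a purely combinatorial assertion: any maximum-weight path exhibiting a difference between the evaluations of $u_n$ and $v_n$ would have to visit $n+1$ distinct levels, and so cannot exist in chain length $n$. The whole point of choosing $\sigma_n$ correctly is to make this level count grow by exactly one at each recursive step.

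I expect (ii) to be the main obstacle. In Section~\ref{sec_rep} we had complete control over the generator matrices $\rho_n(x)$, whereas for identities the matrices substituted for the variables are arbitrary, so one must reason simultaneously about every weighting of the chain; this is precisely the difficulty that has so far kept $\mathrm{Id}(UT_n(\trop))$ from being described for $n \geq 3$. A more structural alternative would be to prove the non-membership $UT_{n+1}(\trop) \notin \mathbf{V}(UT_n(\trop))$ head-on: $UT_{n+1}(\trop)$ carries an ideal chain $UT_{n+1}(\trop) = I_0 \supsetneq I_1 \supsetneq \cdots \supsetneq I_{n+1} = \{0\}$ with $I_k I_l \subseteq I_{k+l}$, so its ideal $I_1$ of strictly upper triangular matrices is nilpotent of class exactly $n+1$, one longer than the corresponding ideal in $UT_n(\trop)$; the plan would be to distil from this gap an equational consequence --- necessarily one preserved by quotients and subdirect products --- that fails for $UT_{n+1}(\trop)$, but converting this ideal-theoretic difference into a genuine identity again runs into the need to understand $\mathrm{Id}(UT_n(\trop))$. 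Finally, the sandwich $\mathbf{V}(UT_n(\trop)) \subseteq \mathbf{V}(\plac{n}) \subseteq \mathbf{V}(UT_d(\trop))$ with $d$ the integer part of $\frac{n^2}{4}+1$, established in this paper, combined with the result of Cain \textit{et al} \cite{Cain17} that the plactic monoids satisfy no common non-trivial identity, gives a softer handle but, on its own, appears only to reprove the ``infinitely often'' statement rather than strictness at every step.
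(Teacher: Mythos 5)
The statement you are addressing is labelled as a \emph{conjecture} in the paper, and the paper offers no proof of it: the authors explicitly note that Corollary~\ref{cor_utgrow} does not automatically imply that each $UT_{n+1}(\trop)$ satisfies strictly fewer identities than $UT_n(\trop)$, and they leave the strictness of every step open. So there is no argument of the authors' to compare yours against, and the relevant question is whether your proposal actually closes the gap. It does not, and to your credit you essentially say so yourself. What you establish correctly is peripheral: the block-diagonal embedding giving $\mathbf{V}(UT_n(\trop)) \subseteq \mathbf{V}(UT_{n+1}(\trop))$, the case $n=1$ via commutativity, and the observation that Corollary~\ref{cor_utgrow} forces infinitely many of these inclusions to be strict (since otherwise the chain of varieties would stabilise and a single non-trivial identity of some $UT_N(\trop)$ would hold in all $UT_n(\trop)$). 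None of this yields strictness at a given step $n \geq 2$.

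The genuine gap is the one you flag as ``the crux'': you never produce the substitutions $\sigma_n$, and consequently never verify property (ii), that the recursively defined identity $u_n = v_n$ holds in $UT_n(\trop)$. Note also that the substitution-instance structure $u_{n+1} = \sigma_n(u_n)$ buys you nothing towards (ii): it shows that any semigroup satisfying $u_n = v_n$ satisfies $u_{n+1} = v_{n+1}$, but $UT_{n+1}(\trop)$ is precisely a semigroup that (by your intended property (i)) \emph{fails} $u_n = v_n$, so no conclusion about its satisfying $u_{n+1} = v_{n+1}$ follows, and the verification must be done from scratch against arbitrary matrix assignments --- which, as you observe, runs into the unsolved problem of describing $\mathrm{Id}(UT_n(\trop))$ for $n \geq 3$. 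The same applies to your ideal-theoretic alternative: the nilpotency-class difference between the strictly upper triangular ideals is not an equational invariant and you give no mechanism for converting it into one. In short, the proposal is a reasonable research plan, but the statement remains exactly as open after your argument as it was before; only the easy containment, the $n=1$ case, and the ``infinitely often'' weakening are actually proved.
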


\section{From Plactic to Tropical Identities}\label{sec_plactictotropical}

In the previous section we proved that the variety generated by the plactic monoid $\plac{n}$ is contained in the variety generated by the monoid $UT_d(\trop)$ of upper triangular tropical matrices, where $d$ is the integer part of $\frac{n^2}{4}+1$. In this section we prove a result in the converse direction: that the variety generated by $\plac{n}$ contains the variety generated by $UT_n(\trop)$.

Our proof strategy is similar in structure to (but technically much more complex than) the argument establishing the main theorem in
\cite{Daviaud18}: we show that any identity which can be falsified in ${\rm UT}_n(\mathbb{T})$ can be falsified by matrices lying in a particular homomorphic image of $\plac{n}$, and hence also in $\plac{n}$ itself. The specific homomorphism we shall use is the $n$-dimensional block of our representation $\rho_n$ corresponding to the singleton subsets of $[n]$ (which, as we saw in Remark~\ref{remark_singletonblockcain}
and \ref{remark_izhakiancain}, is essentially the same as representations constructed in \cite{Izhakian19} and \cite{Cain17}). In this section we will
identify singleton subsets with their elements, and write $\rho : \plac{n} \to UT_n(\trop)$ for this representation. We shall need some (rather technical) sufficient conditions for a matrix to lie in the image of $\rho$.

\begin{remark}\label{remark_tabcond}
Let $T$ be a tableau over $[n]$, and for $1 \leq x \leq y \leq n$ let $i_{x,y}$ denote the number of times the symbol $y$ appears in row $x$ of $T$. Clearly
the tableau $T$ is exactly characterised by the values $i_{x,y}$; we refer to these as the \textit{parameters} of the tableau.
(They are the entries of the \textit{configuration tableau} associated to a semi-standard tableau in \cite[Section 6.3]{Izhakian19}.)
It follows easily from
the tableau properties that
$$\sum_{y=x}^{x+t}  i_{x,y} \ \geq \ \sum_{y=x+1}^{x+t+1}  i_{x+1,y}$$
for all $1 \leq x < n$ and $0 \leq t < n-x$.
Conversely, any family of non-negative integer values $i_{x,y}$ which satisfies these inequalities determines a tableau
$T$ of which the values are the parameters.
\end{remark}

\begin{lemma}
\label{imentries}
Let $T$ be a tableau with parameters $i_{x,y}$ for $1 \leq x \leq y \leq n$, and suppose $i_{x+1,y+1} \geq i_{x, y}$ for all $1 \leq x < y<n$. Then
$$\rho(T)_{p,q} \ = \ \sum_{j=p}^{q-1} i_{p,j} + \sum_{j=1}^{p} i_{j,q} \ \ \textrm{ for all } 1 \leq p \leq q \leq n.$$
\end{lemma}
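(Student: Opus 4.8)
The plan is to use the combinatorial description of $\rho$ from Remark~\ref{remark_singletonblockcain}. Identifying each singleton $\{p\}$ with $p$, and letting $w$ be the column reading of $T$ (so that $\rho(T)=\rho_n(w)$ by Lemma~\ref{lemma_morphism}), the entry $\rho(T)_{p,q}$ is the maximal length of a weakly increasing scattered subword of $w$ all of whose letters lie in $[p,q]$. Since the entries strictly decrease down each column of $T$, such a subword uses at most one letter from each column; hence $\rho(T)_{p,q}$ is the largest $m$ for which there are columns $c_1<\dots<c_m$ of $T$ and a choice of one entry from each, with values $v_1\le\dots\le v_m$ all in $[p,q]$. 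I will call such a choice a \emph{selection} of size $m$, and say it is \emph{canonical} if every chosen entry lies in row $p$ (then necessarily with value in $[p,q]$) or else has value $q$ and lies in a row below $p$.

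First I would prove the inequality ``$\ge$'', which needs no hypothesis. Consider the selection consisting of every entry of row $p$ of value at most $q$, followed by every entry equal to $q$ lying in rows $1,\dots,p-1$. Row-$p$ entries have value at least $p$ automatically, and since row $p$ is weakly increasing they occupy the leftmost columns meeting row $p$ and their values increase weakly; by the strict decrease down columns, any column containing a $q$ in a row below $p$ either fails to reach row $p$ or has a row-$p$ entry exceeding $q$, so such a column is distinct from, and lies to the right of, all columns used in row $p$. Thus this is a genuine (canonical) selection, of size $\sum_{j=p}^{q}i_{p,j}+\sum_{i=1}^{p-1}i_{i,q}=\sum_{j=p}^{q-1}i_{p,j}+\sum_{i=1}^{p}i_{i,q}$, giving the lower bound. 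The same observation shows the maximal canonical selection has exactly this size, since the two families of entries involved occupy pairwise distinct columns.

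The content of the lemma is the reverse inequality, and here the hypothesis $i_{x+1,y+1}\ge i_{x,y}$ is essential: it is easy to build tableaux violating it for which $\rho(T)_{p,q}$ strictly exceeds the stated sum. I would prove ``$\le$'' by showing that an arbitrary selection can be transformed, without decreasing its size, into a canonical one; combined with the previous paragraph this gives the bound. The transformation repeatedly repairs the leftmost chosen entry that is not canonical. Such an entry lies either (a) strictly above row $p$, or (b) in a row $r<p$ with value $v<q$. In case (a) one slides the entry straight down its column to row $p$: since row $p$ is weakly increasing the new value is still at least the value of the entry chosen just before it, and it is strictly smaller than the old value so does not exceed the value of the entry chosen just after it; a short argument with the strict decrease down columns and the left-to-right order on columns shows that the entry chosen just before cannot itself have value $q$, so the resulting selection is still valid and has the offending entry now in row $p$. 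Case (b) is where the hypothesis enters: it forces a ``diagonal staircase'' of entries $(r,v),(r+1,v+1),(r+2,v+2),\dots$ to occur in $T$, and this staircase must reach a canonical position --- either row $p$ with value $\le q$, or value $q$ in a row $\le p$ --- before the relevant index leaves $[n]$; the extra entries it supplies are then used to re-route the selection so that the offending entry is replaced by a canonical one with no loss of size. Iterating produces a canonical selection.

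The step I expect to be the main obstacle is making case (b) of the upper bound rigorous: one must control which columns the staircase entries occupy relative to the columns already used by the selection, and verify that the re-routing keeps the chosen columns distinct, the chosen values weakly increasing, and all values inside $[p,q]$. Everything else --- the reduction in the first paragraph, the verification of the lower bound, and case (a) --- is routine bookkeeping with the order on subsets and the tableau inequalities recorded in Remark~\ref{remark_tabcond}.
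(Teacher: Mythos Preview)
Your approach is essentially the paper's: both identify the same canonical selection (row-$p$ entries of value at most $q$ together with the $q$-entries in rows $1,\dots,p-1$), both handle entries above row $p$ by sliding down into row $p$, and both invoke the hypothesis $i_{x+1,y+1}\ge i_{x,y}$ to trade reads of $y$ from row $x$ for reads of $y{+}1$ from row $x{+}1$, which is precisely one step of your ``staircase''.

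The difference lies in how the upper bound is formalised, and the paper's device is exactly what resolves the obstacle you anticipate in case~(b). Rather than repairing one entry at a time and tracking columns, the paper works with the \emph{row} reading and equips the set $S'$ of maximal-length selections lying in rows $1,\dots,p$ with a partial order: for $\alpha\in S'$ and $i\in[p,q]$ set $\alpha^{(i)}$ to be the highest row from which $\alpha$ reads a symbol $\ge i$, and declare $\alpha\le\beta$ when $\alpha^{(i)}\le\beta^{(i)}$ for all $i$. If $\alpha\in S'$ is not canonical, one takes $x$ to be the \emph{lowest} row from which $\alpha$ reads a symbol $<q$ and $y$ the \emph{least} symbol $\alpha$ reads from row $x$, and replaces \emph{all} of $\alpha$'s $y$-reads from row $x$ by \emph{all} of the $(y{+}1)$'s in row $x{+}1$; the hypothesis guarantees the result $\beta$ is at least as long, the extremal choices of $x$ and $y$ guarantee $\beta$ is still non-decreasing, and one checks $\alpha<\beta$ in the order. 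Finiteness of $S'$ then forces the canonical selection to be maximal. This global replacement plus the partial order gives termination with no column bookkeeping whatsoever, which is cleaner than the one-entry-at-a-time re-routing you sketch.
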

\begin{proof}
Let $w \in [n]^*$ be the row reading of $T$.
By Remark~\ref{remark_singletonblockcain}, $\rho(T)_{p,q}$ is the maximum length of a non-decreasing scattered subword of
$w$ contained in $[p,q]^*$.
Clearly the right-hand-side of the given equation is the length of one such scattered subword: namely that consisting of all
symbols from $[p,q]$ in row $p$, followed by all occurrences of the symbol $q$ in rows below; call this subsequence $\chi$. It will suffice to show that $\chi$ is of maximal length among non-decreasing scattered subwords of $w$.

Let $S$ be the set of non-decreasing scattered subwords of $w$ contained in $[p,q]^*$ and of maximal length. Let $S'$ be the
subset of $S$ consisting of those scattered subwords that only include letters from row $p$ of $T$ and below. We claim first that $S'$ is non-empty.
Indeed, given a sequence in $S$, it is easy to see (bearing in mind that a non-decreasing subsequence of $w$ can include at most one symbol
read read each column of $T$) that one can obtain another sequence in $S$ of the same length by replacing each letter read from strictly above row $p$ with the letter directly below it in row $p$. 

Now for $\alpha \in S'$ and
$i \in [p,q]$ we let $\alpha^{(i)}$ denote the number of the highest row of $T$ from which $\alpha$ includes a symbol greater than or equal to $i$, or $0$
if $\alpha$ does not contain such a symbol. We partially order $S'$ by $\alpha \leq \beta$ if $\alpha^{(i)} \leq \beta^{(i)}$ for all $i \in [p,q]$.
Since $S'$ is finite and non-empty, this order must have at least one maximal element; we claim that $\chi$ is the unique maximal element.

Indeed, suppose for a contradiction that some other element $\alpha \in S'$, not equal to $\chi$, is maximal. Then since $\alpha$  is not $\chi$, but is at least as long as $\chi$, it must read some symbol $y \neq q$ in some row $x \neq p$. Since $\alpha \in S'$, we must have $x < p$, and since
$\alpha$ is a word in $[p,q]^*$ we must have $y < q$. We may
suppose without loss of generality that $x$ is the lowest row from which $\alpha$ reads a symbol less than $q$, and that $y$ is the least symbol which $\alpha$ reads from row $x$.

By assumption $i_{x+1,y+1} \geq i_{x,y}$, which means there are at least as many occurrences of the symbol $y+1$ in row $x+1$, as there are occurrences
of $y$ in row $x$ (so in particular, there is at least one occurrence of $y+1$ in row $x+1$). Let $\beta$ be the scattered subsequence obtained by modifying 
$\alpha$ to read all of the $(y+1)$s in row $x+1$ instead of
the $y$s read by $\alpha$  in row $x$. Then $\beta$ is non-decreasing (because row $x$ was the lowest row from which $\alpha$ reads $y$, and $y$ was the lowest symbol which $\alpha$ reads from row $x$). Moreover, $\beta$ is 
at least as long as $\alpha$, which since $\alpha$ was assumed to be of maximal length means $\beta$ is also of maximal length. Thus, $\beta \in S'$.

We claim that $\alpha < \beta$. Clearly $\alpha^{(y+1)} \leq x$ (because $\alpha$ reads $y$s from row $x$, and so cannot read anything greater than
$y$ strictly above row $x$) while $\beta^{(y+1)} \geq x+1$ (because $\beta$ reads $(y+1)$s in row $x+1$); hence $\alpha^{(y+1)} < \beta^{(y+1)}$.
Now if $\alpha$ reads $y$s from a row strictly above row $x$ then we have $\alpha^{(y)} = \beta^{(y)}$, while if it doesn't then
$\alpha^{(y)} = x$ and $\beta^{(y)} = x+1$ (since $\beta$ reads $(y+1)s$ in row $x+1$); so in all cases $\alpha^{(y)} \leq \beta^{(y)}$. Finally,
consider an $i \notin \lbrace y, y+1 \rbrace$.
If $i < y$ and no symbols from $[i, y-1]$ appear in $\alpha$ then also no symbols from $[i,y-1]$ appear in $\beta$, so we have 
$\alpha^{(i)} = \alpha^{(y)} \leq \beta^{(y)} = \beta^{(i)}$; otherwise, we have $\alpha^{(i)} = \beta^{(i)}$, so in all cases $\alpha^{(i)} \leq \beta^{(i)}$,
completing the proof of the claim that $\alpha < \beta$.

But this contradicts the maximality assumption on $\alpha$. Hence, we conclude that $\chi$ must be the unique maximal element of $S'$, so in particular $\chi \in S$ as required to complete the proof.
\end{proof}

\begin{lemma}
\label{suffinequal}
Let $X \in UT_n(\trop)$ be an upper triangular tropical matrix with non-negative integer entries on and above the main diagonal, and
satisfying the following inequalities:
\begin{itemize}
\item[(A)] $\sum_{i=1}^n X_{i,i} \geq \sum_{i=1}^{n-1} X_{i,i+1}$;
\item[(B)] $X_{i,j+1} \geq  X_{i,j}$ for all $1 \leq i \leq j < n$;
\item[(C)] $ X_{i-1,j} \geq X_{i,j}$ for all $1 < i \leq  j \leq n$;
\item[(D)] $X_{i,j}+X_{i+1,j+1} \geq X_{i,j+1} + X_{i+1,j}$ for all $1 \leq i <j <n$.
\end{itemize}
Then $X = \rho(T)$ for some $s \in \mathbb{T}_n$. 
\end{lemma}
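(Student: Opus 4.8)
\emph{The plan is to invert the formula of Lemma~\ref{imentries}.} Given $X$, I will write down a candidate set of tableau parameters $i_{x,y}$ ($1\le x\le y\le n$), verify that these are genuinely the parameters of some tableau $T$ over $[n]$ to which Lemma~\ref{imentries} applies, and then read off from that lemma that $\rho(T)=X$. Concretely, I define integers $i_{x,y}$ by $i_{1,1}=X_{1,1}$, $i_{1,q}=X_{1,q}-X_{1,q-1}$ for $2\le q\le n$, $i_{x,x}=i_{x-1,x-1}+X_{x,x}-X_{x-1,x}$ for $2\le x\le n$, and
$$i_{x,y}=i_{x-1,y-1}+X_{x-1,y-1}+X_{x,y}-X_{x-1,y}-X_{x,y-1}\qquad(2\le x<y\le n),$$
equivalently by the closed forms obtained by telescoping these relations along the diagonals $\{(x,x+d)\}$; all of the $i_{x,y}$ are then integers. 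It remains to establish the following four statements, after which the conclusion is immediate: (i) $i_{x,y}\ge 0$ for all $x\le y$; (ii) the $i_{x,y}$ satisfy the inequalities of Remark~\ref{remark_tabcond}, so that there is a tableau $T$ over $[n]$ whose parameters are the $i_{x,y}$; (iii) $i_{x+1,y+1}\ge i_{x,y}$ for all $1\le x<y<n$, so that Lemma~\ref{imentries} applies to $T$; and (iv) $\sum_{j=p}^{q-1}i_{p,j}+\sum_{j=1}^{p}i_{j,q}=X_{p,q}$ for all $p\le q$, so that Lemma~\ref{imentries} gives $\rho(T)_{p,q}=X_{p,q}$ and hence, both matrices being upper triangular, $\rho(T)=X$.

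Steps (i)--(iii) will each come down to one of the hypotheses (A)--(D). For (iii), the recursion gives $i_{x+1,y+1}-i_{x,y}=X_{x,y}+X_{x+1,y+1}-X_{x,y+1}-X_{x+1,y}$, which is exactly the quantity appearing in (D) and so is non-negative when $1\le x<y<n$. For (ii), telescoping the recursion yields, for the relevant ranges of $x$ and $t$,
$$\sum_{y=x}^{x+t}i_{x,y}-\sum_{y=x+1}^{x+t+1}i_{x+1,y}=X_{x,x+t+1}-X_{x+1,x+t+1},$$
which is non-negative by (C). For (i): the closed form expresses each off-diagonal $i_{x,y}$ (with $y>x$) as a sum of one term of the shape $X_{1,m+1}-X_{1,m}$ (non-negative by (B)) and several of the shape $X_{a,b}+X_{a+1,b+1}-X_{a,b+1}-X_{a+1,b}$ with $1\le a<b<n$ (non-negative by (D)); and it gives $i_{p,p}=\sum_{j=1}^{p}X_{j,j}-\sum_{j=1}^{p-1}X_{j,j+1}=X_{1,1}-\sum_{j=1}^{p-1}(X_{j,j+1}-X_{j+1,j+1})$, where each summand $X_{j,j+1}-X_{j+1,j+1}$ is non-negative by (C), so this quantity is non-increasing in $p$; since at $p=n$ its value is $\sum_{j=1}^{n}X_{j,j}-\sum_{j=1}^{n-1}X_{j,j+1}\ge 0$ by (A), it follows that $i_{p,p}\ge 0$ for every $p$. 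Thus (A) enters only to control the diagonal parameters, and there precisely in tandem with (C).

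For (iv), once $T$ is known to be an honest tableau I would prove $\sum_{j=p}^{q-1}i_{p,j}+\sum_{j=1}^{p}i_{j,q}=X_{p,q}$ by induction on $p$: unwinding the recursion, the row sum $\sum_{j=p}^{q-1}i_{p,j}$ and the column sum $\sum_{j=1}^{p}i_{j,q}$ each collapse, after telescoping, to differences of entries of $X$, and the inductive hypothesis closes the gap; this step uses none of (A)--(D), only the definition of the $i_{x,y}$. I expect the main obstacle to be organisational rather than conceptual: keeping straight the several telescoping sums in (ii) and (iv), and treating carefully the boundary cases — the first row and the main diagonal — where the generic recursion $i_{x,y}=i_{x-1,y-1}+\cdots$ degenerates and must be read off separately. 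With those book-keeping points handled, each of (i)--(iv) reduces to a one-line appeal to the corresponding hypothesis.
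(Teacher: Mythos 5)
Your proposal is correct and follows essentially the same route as the paper's own proof: your recursively defined parameters $i_{x,y}$ coincide with the paper's second differences of the anti-diagonal sums $f(x,y)=\sum_{i=1}^{x}X_{i,y-x+i}$, and your steps (i)--(iv) use hypotheses (A)--(D) in exactly the same roles (non-negativity of diagonal parameters via (A) and (C), of off-diagonal ones via (B) and (D), the tableau inequalities via (C), the hypothesis of Lemma~\ref{imentries} via (D), and the final identity $\rho(T)_{p,q}=X_{p,q}$ by pure telescoping). The deferred book-keeping in (iv) is carried out in the paper precisely as you anticipate.
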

(We remark that properties (B) and (C) together amount to the matrices being \textit{synoptic} in the sense defined in \cite{Izhakian19}).)
\begin{proof}
For $0 \leq x \leq y \leq n$ let $f(x,y) =\sum_{i=1}^x X_{i,y-x+i}$. We record the following useful fact:
\begin{equation}
\label{fact}
f(s+1,t+1)-f(s,t)=X_{s+1,t+1}\,  \mbox{ for all } \,0 \leq s \leq t <n,
\end{equation}
which is immediate from this definition.

Now for $1 \leq x \leq y \leq n$ define
$$i_{x,y} = \begin{cases} f(x,y)-f(x-1,y)-f(x,y-1)+f(x-1,y-1)& \mbox{ if } x <y\\
f(x,x)-f(x-1,x) & \mbox{ if } x =y,
\end{cases}$$
We shall first use the conditions in Remark~\ref{remark_tabcond} to show that there is a tableau $T$ with these parameters, and then show that $T$ satisfies the conditions of Lemma~\ref{imentries}, so that $\rho(T)_{p,q}=X_{p,q}$ for all $p$ and $q$. 

Clearly since the entries of $X$ are assumed to be integers, the parameters $i_{x,y}$ are integers. We claim that they are non-negative. Indeed, when $y=x$ one finds that
\begin{eqnarray*}
i_{x,y} &=& f(x,x)-f(x-1,x) \\
& =& \left( \sum_{i=1}^n  X_{i,i} -  \sum_{i=x+1}^n X_{i,i} \right) - \left( \sum_{i=1}^{n-1} X_{i,i+1} - \sum_{i=x}^{n-1} X_{i,i+1} \right) \ \\
& =& \left( \sum_{i=1}^n  X_{i,i} -  \sum_{i=1}^{n-1} X_{i,i+1} \right) - \left( \sum_{i=x+1}^n X_{i,i} - \sum_{i=x}^{n-1} X_{i,i+1} \right) \ \\
& \geq & \sum_{i=x}^{n-1} X_{i,i+1} - \sum_{i=x+1}^n X_{i,i} \qquad \mbox{ by (A)}\\
& \geq & \sum_{i=x}^{n-1} (X_{i,i+1} - X_{i+1,i+1}) \ \\
& \geq & 0 \qquad\qquad \qquad \qquad\qquad \quad \mbox{ by (C)}.
\end{eqnarray*}
For $y>x$ we have
\begin{eqnarray*}
i_{x,y} &=& f(x,y)-f(x-1,y)-f(x,y-1)+f(x-1,y-1)\\
&=& \left(X_{1,y-x+1} + \sum_{i=1}^{x-1}X_{i+1,y-x+i+1}\right)- \left(\sum_{i=1}^{x-1}X_{i,y-x+i+1}\right)\\
&&- \left(X_{1,y-x} + \sum_{i=1}^{x-1}X_{i+1,y-x+i}\right)+ \left(\sum_{i=1}^{x-1}X_{i,y-x+i}\right)\\
& =& (X_{1,y-x+1} - X_{1,y-x}) \\
&&+ \ \ \ \ \ \ \sum_{i=1}^{x-1}( (X_{i,y-x+i}+ X_{i+1,y-x+i+1})- (X_{i,y-x+i+1} + X_{i+1,y-x+i}))\\
& \geq &  \sum_{i=1}^{x-1}( (X_{i,y-x+i}+ X_{i+1,y-x+i+1})- (X_{i,y-x+i+1} + X_{i+1,y-x+i})),  \mbox{ by (B)},\\
& \geq & 0, \qquad \qquad \qquad \qquad \qquad \qquad \qquad \qquad  \qquad \qquad \qquad \qquad \qquad \mbox{by (D).}
\end{eqnarray*}
Thus, all the parameters $i_{x,y}$ are non-negative integers. By Remark~\ref{remark_tabcond}, to verify that there is a tableau with these parameters, it now suffices to show that
$$\sum_{y=x}^{x+t}  i_{x,y} - \sum_{y=x+1}^{x+t+1}  i_{x+1,y} \geq 0\qquad  \mbox{ for all } 1 \leq x < n, \, 0 \leq t < n-x.$$
For each $1 \leq x < n$ and $0 \leq t < n-x$ we have
\begin{eqnarray}
\sum_{y=x}^{x+t} i_{x,y} &=& \left( i_{x,x} + \sum_{y=x+1}^{x+t}  i_{x,y}\right) \nonumber \\
&=& i_{x,x} + \sum_{y=x+1}^{x+t}  (f(x,y)-f(x-1,y)-f(x,y-1)+f(x-1,y-1)) \nonumber \\
&=& i_{x,x} + \sum_{y=x+1}^{x+t}  (f(x,y)-f(x-1,y)) - \sum_{y=x+1}^{x+t} ( f(x,y-1)-f(x-1,y-1)) \nonumber \\
&=& i_{x,x} + \sum_{y=x+1}^{x+t}  (f(x,y)-f(x-1,y)) - \sum_{y=x}^{x+t-1} ( f(x,y)-f(x-1,y)) \nonumber \\
&=& i_{x,x} + (f(x,x+t)-f(x-1,x+t)) -( f(x,x)-f(x-1,x)) \nonumber \\
&=& i_{x,x} + (f(x,x+t)-f(x-1,x+t)) -i_{x,x} \nonumber \\
&=& f(x,x+t)-f(x-1,x+t) \label{eqhorrible}
\end{eqnarray}
and likewise
\begin{eqnarray*}
\sum_{y=x+1}^{x+t+1} i_{x+1, y}&=& f(x+1,x+t+1)-f(x,x+t+1).
\end{eqnarray*}
Using \eqref{fact} we obtain
\begin{eqnarray*}
\sum_{y=x}^{x+t} i_{x,y} - \sum_{y=x+1}^{x+t+1} i_{x+1, y}
&=&( f(x,x+t)-f(x-1,x+t)) \\
&&- (f(x+1,x+t+1)-f(x,x+t+1))\\
&=&( f(x,x+t)-f(x+1,x+t+1)) \\
&&- (f(x-1,x+t)-f(x,x+t+1))\\
&=&( -X_{x+1, x+t+1}) - (-X_{x,x+t+1})\\
&=& X_{x,x+t+1}-X_{x+1, x+t+1},
\end{eqnarray*}
which is non-negative by (C).

Thus we have shown that there is a tableau $T$ with parameters $i_{x,y}$. 
It remains to show that $\rho(T) = X$. To this end, first let $1 \leq x < y < n$ and consider the difference
\begin{eqnarray*}
i_{x+1,y+1} - i_{x, y} =& (f(x+1,y+1) -f(x,y+1)-f(x+1,y)+f(x,y))\\
& - (f(x,y) -f(x-1,y)-f(x,y-1)+f(x-1,y-1)).
\end{eqnarray*}
Using \eqref{fact} we obtain
$$ i_{x+1,y+1} - i_{x, y} = X_{x+1,y+1} - X_{x,y+1}- X_{x+1,y} + X_{x,y},$$
which is non-negative by (D). Thus the tableau $T$ satisfies the conditions of Lemma~\ref{imentries}, so 
it will suffice to show that:
$$X_{p,q} \ = \ \sum_{j=p}^{q-1} i_{p,j} + \sum_{j=1}^{p} i_{j,q} \textrm{ for all } 1 \leq p \leq q \leq n.$$

If $p<q$,  then equation \eqref{eqhorrible} (with $x=p$ and $t = q-p-1$) gives
\begin{equation}
\sum_{j=p}^{q-1} i_{p,j} \ = \ f(p,q-1)-f(p-1,q-1). \label{eqhorrible2}
\end{equation}
Similarly,
\begin{eqnarray}
\sum_{j=1}^{p-1} i_{j,q} &=&\sum_{j=1}^{p-1}\left(f(j,q)-f(j-1,q)-f(j,q-1)+f(j-1,q-1)\right) \nonumber \\
&=&\sum_{j=1}^{p-1}\left( f(j,q) - f(j,q-1)\right) - \sum_{j=1}^{p-1}\left(f(j-1,q)-f(j-1,q-1)\right) \nonumber \\
&=&\sum_{j=1}^{p-1}\left( f(j,q) - f(j,q-1)\right) - \sum_{j=0}^{p-2}\left(f(j,q)-f(j,q-1)\right) \nonumber \\
&=&\left( f(p-1,q) - f(p-1,q-1)\right) - \left(f(0,q)-f(0,q-1)\right) \nonumber \\
&=&f(p-1,q) - f(p-1,q-1). \label{eqhorrible3}
\end{eqnarray}
If $p=q$  it is clear that $\sum_{j=p}^{p-1} i_{p,j}=0$, and applying \eqref{fact} and \eqref{eqhorrible3} we find
\begin{eqnarray*}
\rho(T)_{p,q} &=& \sum_{j=p}^{p-1} i_{p,j} + \sum_{j=1}^{p} i_{j,p} \\
&=& 0 + f(p-1,p) - f(p-1,p-1) + i_{p,p}\\
&=& f(p-1,p) - f(p-1,p-1) + f(p,p)-f(p-1,p)\\
&=& f(p,p) - f(p-1,p-1) \\
&=& X_{p,p}.
\end{eqnarray*}

If $q>p$ then using \eqref{fact}, \eqref{eqhorrible2} and \eqref{eqhorrible3} we have
\begin{eqnarray*}
\rho(T)_{p,q} &=& \sum_{j=p}^{q-1} i_{p,j} + \sum_{j=1}^{p} i_{j,q} \\
&=& (f(p,q-1)-f(p-1,q-1)) + (f(p-1,q) - f(p-1,q-1)) + i_{p,q}\\
&=& f(p,q-1)-f(p-1,q-1) + f(p-1,q) - f(p-1,q-1)\\
&& + f(p,q)-f(p-1,q)-f(p,q-1)+f(p-1,q-1)\\
&=& f(p,q) - f(p-1,q-1) \\
&=& X_{p,q}.
\end{eqnarray*}

Thus in all cases we have shown that $\rho(T)_{p,q} = X_{p,q}$, as required to conclude that $X = \rho(T)$.
\end{proof}

To establish the main theorem of this section, we combine the previous lemma with some machinery introduced by Daviaud and the present authors in \cite{Daviaud18} and further developed by Fenner and the first author \cite{JohnsonFenner} and also, in a slightly different direction, by the second author \cite{Kambites19}. We briefly recall the required definitions, simplifying them slightly where the full complexity is not required for our purposes; for a more complete introduction the reader is referred to the cited papers.

Let $w$ be a word over an alphabet $\Sigma$. For $0 \leq p < q \leq |w|+1$ and $s \in \Sigma$ we
define
$$\beta_s^w(p, q) \ = \ | \lbrace i \in \mathbb{N} \mid p < i < q, w_i = s \rbrace |$$
to be the number of occurrences of $s$ lying strictly \emph{between} $w_{p}$ and $w_{q}$.
Now let $u \in \Sigma^*$ with $|u| \leq n-1$ and let $\pi=(\pi_0, \pi_1, \ldots, \pi_{|u|})$ be a strictly increasing sequence of $|u|+1$ elements
of $[n]$. We define a formal tropical polynomial, having variables $x(s,i)$ for each letter $s \in \Sigma$ and each vertex $i \in [n]$, as follows:
\begin{eqnarray*}
f_{u, \pi}^{w} = \ \bigoplus \bigotimes_{s\in \Sigma}\bigotimes_{k=0}^{|u|} x(s, \pi_k)^{\beta_s^w(\alpha_k, \alpha_{k+1})},
\end{eqnarray*}
where the sum ranges over all $0=\alpha_0< \alpha_1<\cdots<\alpha_{|u|}<\alpha_{|u|+1}=|w|+1$ such that $w_{\alpha_k}=u_k$ for $k=1, \ldots, |u|$. Powers are interpreted tropically, and a maximum over the empty set is taken to be $-\infty$. Following \cite{JohnsonFenner} we write $f_{u}^w$ for 
$f_{u,\pi}^{w}$ where $\pi = (1, \dots, |u|+1)$. We shall write $\underline{x}$ for a vector giving values for each variable $x(s,i)$, and
$f_{u,\pi}^{w}(\underline{x})$ for the element of $\trop$ obtained by evaluating the polynomial at these values of the variables.

\begin{theorem}
\label{subvariety}
Every identity satisfied in the plactic monoid $\mathbb{P}_n$ of rank $n$ (for $n \geq 1$) is satisfied in the monoid $UT_n(\mathbb{T})$ of $n \times n$ upper triangular
tropical matrices.
\end{theorem}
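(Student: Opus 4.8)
The plan is to prove the contrapositive: assuming an identity $\mathbf{u} = \mathbf{v}$ fails in $UT_n(\trop)$, I will show that it already fails in $\plac{n}$. If $|\mathbf{u}| \neq |\mathbf{v}|$ this is immediate, since the Knuth relations preserve content and so words of different length represent different elements of $\plac{n}$; so assume $|\mathbf{u}| = |\mathbf{v}|$. The idea is then to falsify the identity inside the homomorphic image $\rho(\plac{n}) \subseteq UT_n(\trop)$, where $\rho : \plac{n} \to UT_n(\trop)$ is the singleton-block representation; by Lemma~\ref{imentries} and, above all, Lemma~\ref{suffinequal} we understand this image well enough to produce explicit elements of it. Concretely, I want to find, for each letter $s$ occurring in the identity, a tableau $T_s$ over $[n]$ such that substituting the matrices $\rho(T_s)$ into the two sides of $\mathbf{u} = \mathbf{v}$ yields different matrices. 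Since $\rho$ is a morphism, the elements of $\plac{n}$ obtained by substituting the tableaux $T_s$ themselves are then distinct, so $\mathbf{u} = \mathbf{v}$ fails in $\plac{n}$, as required.

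The first step is to extract a manageable combinatorial witness of the failure in $UT_n(\trop)$. By the description of the identities of $UT_n(\trop)$ developed in \cite{Daviaud18} and \cite{JohnsonFenner}, the monoid $UT_n(\trop)$ satisfies $\mathbf{u} = \mathbf{v}$ precisely when $f_u^{\mathbf{u}} = f_u^{\mathbf{v}}$ as tropical polynomial functions for every word $u$ with $|u| \leq n-1$; our failure hypothesis therefore hands us such a $u$, of some length $m \leq n-1$, together with a point $\underline{x}$, which we may take to have all coordinates finite, at which $f_u^{\mathbf{u}}(\underline{x}) \neq f_u^{\mathbf{v}}(\underline{x})$. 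Both $f_u^{\mathbf{u}}$ and $f_u^{\mathbf{v}}$ are homogeneous tropical polynomials of the same degree $|\mathbf{u}| - m = |\mathbf{v}| - m$ (the total degree of each monomial of $f_u^{\mathbf{w}}$ is $|\mathbf{w}| - |u|$, independently of the factorisation defining it), so the strict inequality at $\underline{x}$ survives multiplying $\underline{x}$ by any positive integer and survives adding any real multiple of the all-ones vector. Hence I may assume $\underline{x}$ has non-negative integer coordinates $x(s,i)$, with $s$ ranging over the letters of the identity and $i \in [m+1]$.

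The heart of the proof — and the step I expect to be by far the most delicate — is to convert this witness into a substitution by matrices that genuinely lie in the image of $\rho$. Using the correspondence of \cite{Daviaud18} between the polynomials $f_u^{\mathbf{w}}$ and entries of matrix products, one realises $f_u^{\mathbf{w}}(\underline{x})$, up to an additive constant depending only on $u$, as a fixed entry of the matrix obtained by sending each letter $s$ to a matrix whose main diagonal carries the values $x(s,1), \dots, x(s,m+1)$, whose entries immediately above the diagonal carry a large constant exactly in the positions that let the relevant factorisation advance, and whose remaining entries are $-\infty$. Such matrices are never of the form $\rho(T)$ — every on-or-above-diagonal entry of $\rho(T)$ is a non-negative integer, and conditions (B), (C), (D) of Lemma~\ref{suffinequal} fail badly for them — so each must be replaced by a modified matrix $X_s$ that (i) has non-negative integer entries and satisfies all of (A), (B), (C), (D), so that $X_s = \rho(T_s)$ for some tableau $T_s$ by Lemma~\ref{suffinequal}, while (ii) still produces different products along $\mathbf{u}$ and along $\mathbf{v}$. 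The mechanism I envisage is to superimpose on the schematic matrix a carefully graded ``background'': large enough to absorb every $-\infty$ and, crucially, large enough that every competing route through the implicit automaton becomes strictly suboptimal by a controlled margin, so that the winning route — and with it the discrepancy $f_u^{\mathbf{u}}(\underline{x}) - f_u^{\mathbf{v}}(\underline{x}) \neq 0$ — is preserved; yet arranged so that rows increase to the right (B), columns increase upward (C), the diagonal supermodularity (D) holds, and the trace dominates the super-diagonal sum (A). Reconciling (A) with (B) is the awkward point, since (B) pushes entries upward along each row while (A) wants the main diagonal to be heavy; handling this tension while keeping the margin bookkeeping tight enough that the optimal route is not disturbed is exactly where this argument becomes ``technically much more complex'' than the corresponding step in \cite{Daviaud18}, where no image constraint had to be respected.

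Once each $X_s$ has been certified by Lemma~\ref{suffinequal} to equal $\rho(T_s)$ for a tableau $T_s$ over $[n]$, the proof concludes as in the first paragraph: substituting $T_s$ for $s$ in $\mathbf{u}$ and in $\mathbf{v}$ gives two elements of $\plac{n}$ whose images under $\rho$ are, by construction, the two distinct matrices $X_{\mathbf{u}}$ and $X_{\mathbf{v}}$, so these elements are distinct and $\mathbf{u} = \mathbf{v}$ fails in $\plac{n}$. This proves the theorem; combined with Corollary~\ref{cor_identities} (for which $d = 3$ when $n = 3$) it yields in particular that $\plac{3}$ and $UT_3(\trop)$ generate the same variety.
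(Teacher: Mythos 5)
Your overall strategy is exactly the paper's: pass to the contrapositive, invoke \cite[Theorem 2.2]{JohnsonFenner} to obtain a word $u$ with $|u| \leq n-1$ and a point $\underline{x}$ (normalised to positive integers by scaling arguments as in \cite{Daviaud18}) witnessing $f_u^{\mathbf{u}}(\underline{x}) \neq f_u^{\mathbf{v}}(\underline{x})$, and then falsify the identity by matrices certified to lie in the image of $\rho$ via Lemma~\ref{suffinequal}. But the step you yourself flag as ``the heart of the proof'' is never carried out: you describe a mechanism you ``envisage'' for building the matrices $X_s$ so that (A)--(D) hold while the dominant term in the $(1,|u|+1)$ entry still isolates $f_u^{\mathbf{w}}(\underline{x})$, and you correctly identify the tension between (A) (heavy diagonal) and (B)/(C) (entries growing away from the diagonal) as the obstacle, but you do not resolve it. The resolution in the paper is concrete and, once seen, simple: define
$(A_i)_{p,q}$ to be $G + x_{i,p}$ on the diagonal, $G+L$ in positions $(p,q)$ with $u_k = a_i$ for some $p \leq k < q$, and $G+S$ otherwise, where $S$ exceeds all coordinates of $\underline{x}$, $L$ is huge relative to $S$ and to all the polynomial values $f_{z,\pi}^{\mathbf{w}}(\underline{x})$, and $G$ is huge relative to $L$. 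The uniform additive constant $G$ on every finite entry is the whole trick for (A): it leaves the differences appearing in (B), (C), (D) untouched, shifts every entry of every length-$\ell$ product by exactly $\ell G$ (hence, since $|\mathbf{u}|=|\mathbf{v}|$, preserves the discrepancy), and makes (A) automatic because a sum of $n$ terms each at least $G$ exceeds a sum of $n-1$ terms each at most $G+L$. Without exhibiting some such construction your argument is a plan rather than a proof.

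Two smaller omissions: you do not verify that the products $\sigma(\mathbf{u})$ and $\sigma(\mathbf{v})$ actually differ --- the paper does this by a dominant-term analysis of \cite[Lemma 5.1]{Daviaud18}, showing each entry is a maximum of terms $|z|G + iL + jS + f_{z,\pi}^{\mathbf{w}}(\underline{x})$ and that the unique term with $z = u$, $i = |u|$ dominates, giving $\sigma(\mathbf{w})_{1,|u|+1} = |u|G + |u|L + f_u^{\mathbf{w}}(\underline{x})$ --- and you do not treat the case where $u$ is a scattered subword of $\mathbf{u}$ but not of $\mathbf{v}$ (so that $f_u^{\mathbf{v}} \equiv -\infty$ and the dominant term on the $\mathbf{v}$ side is strictly smaller for a different reason). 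Both are routine given the explicit matrices, but they are part of the argument you have left unwritten.
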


\begin{proof}
We prove the contrapositive. Suppose that the identity $w=v$ (over an alphabet $\Sigma = \{a_1, \ldots, a_m\}$) does not hold in $UT_n(\mathbb{T})$. If $w$ and $v$ differ in their content (that is, the total number of times that each letter occurs) then it is trivial to see that $w=v$ cannot be an identity satisfied in $\mathbb{P}_n$ so we may assume that $w$ and $v$ have the same content, and in particular the same length.

Now by \cite[Theorem 2.2]{JohnsonFenner} there exists a word $u$ of length at most $n-1$ such that the polynomials $f_u^w$ and $f_u^v$ are not functionally equivalent. Thus we may choose values $\underline{x} \in \mathbb{R}^{mn}$ with $f_u^w(\underline{x}) \neq f_u^v(\underline{x})$.
Reasoning exactly as in the proof of \cite[Theorem 4.1]{Daviaud18}, the solution set to the system of inequalities $f_u^w(\underline{x}) \neq f_u^v(\underline{x})$ is open in the usual topology on $\mathbb{R}^{mn}$, and closed under both tropical scaling and classical scaling; from this it follows that we may assume without loss of generality that values in $\underline{x}$ are positive integers.

Notice that $u$ must occur as a scattered subword of at least one of $w$ or $v$, since otherwise we would have $f_u^w(\underline{x}) \equiv f_u^v(\underline{x}) \equiv -\infty$. Again without loss of generality, assume that $u$ is a scattered subword of $w$. 

We shall use the values $\underline{x}$ to construct a collection of $n \times n$ upper triangular tropical matrices which falsify the identity $w=v$, and which satisfy the conditions of Lemma \ref{suffinequal} and hence lie in the image of the representation $\rho$.
Since the identity fails in a homomorphic image of $\mathbb{P}_n$, we can then conclude that it fails in $\mathbb{P}_n$.

To this end, choose a positive integer $S$ larger than all the values in $\underline{x}$. Choose another positive integer $L$ very large relative to both $S$ and
all the values $f_{z,\pi}^{w}(\underline{x})$ and $f_{z,\pi}^v(\underline{x})$, as $\pi$ ranges over all increasing sequences of $|u|+1$ elements from $[n]$, and $z$ over all words of length up to $n-1$. Choose a further positive integer $G$ very large relative to $L$.
Now for $i \in [m]$ and $p,q \in [n]$ define
\begin{eqnarray*}
(A_i)_{p,q} &=& \begin{cases} -\infty & \mbox{ if } p>q\\
G + x_{i,p} & \mbox{ if } p=q\\
G + L & \mbox{ if } u_k = a_i \textrm{ for some } k \textrm{ with } p \leq k < q \textrm{ and } 1 \leq k \leq |u| \\
G + S & \mbox{ otherwise }.\end{cases}
\end{eqnarray*}
Notice that since (by assumption) $\underline{x}$ contains only positive integers, the matrices $A_i$ contain only positive integer entries on and above the diagonal. Let $\sigma : \Sigma^+ \to UT_n(\trop)$ be the unique morphism taking each $a_i$ to the matrix $A_i$.

We claim that, provided $L$ and $G$ were chosen sufficiently large, $\sigma(w)$ and $\sigma(v)$ differ in the $(1,|u|+1)$ position, and hence the matrices above falsify the identity $w=v$ in ${\rm UT}_n(\mathbb{T})$. By \cite[Lemma 5.1]{Daviaud18} we have
\begin{equation*}\label{eq1}
\sigma(w)_{1,{|u|+1}} \ = \ \bigoplus_{\substack{z \in \Sigma^*,\\ |z| \leq n-1}}\bigoplus_{\pi} \left(\bigotimes_{k=1}^{|z|} \sigma(z_k)_{\pi_{k-1}, \pi_k}\right) \otimes f_{z, \pi}^w(\underline{x}),
\end{equation*}
where $\pi$ ranges over strictly increasing sequences $(\pi_0, \pi_1, \dots, \pi_{|z|})$ of $|z|+1$ elements of $[n]$ which begin with $1$ and
end with $|u|+1$.\footnote{In the language of \cite{Daviaud18} these are ``$(|z|+1)$-vertex paths from $1$ to $|u|+1$''.} Note that if
$|z| > |u|$ then there are no such sequences $\pi$, so it suffices to consider $|z| \leq |u|$

Notice that by the definition of the $A_i$'s, each term in this maximum has the form $|z| G + i L + j S + f_{z, \pi}^w(\underline{x})$ for some
$z \in \Sigma^*$ with $|z| \leq |u|$, some $i$ and $j$ such that $i+j = |z|$ and some appropriate sequence $\pi$.  Clearly,
provided $L$ and $G$ were chosen large enough, any term with the property that $|z| = |u| = i$ and the polynomial appearing is not $-\infty$ will dominate any terms which do not have this property. It
is easy to see that there is exactly one term with this property: namely that corresponding to the choice $z = u$ and $\pi = (1, 2, \dots, |u|+1)$. Since $u$ is assumed to be a scattered subword of $w$, for this term we have
$f_{z, \pi}^w(\underline{x}) \neq -\infty$. So this term dominates all others and we have $\sigma(w)_{1,|u|+1} \ = |u| G + |u| L + f_u^w(\underline{x})$. 

If $u$ is also a scattered subword of $v$ then an identical argument yields
$\sigma(v)_{1,|u|+1} \ = |u| G = |u| L + f_u^v(\underline{x})$, and since by assumption $f_u^w(\underline{x}) \neq f_u^v(\underline{x})$ this means
$\sigma(v) \neq \sigma(w)$. On the other hand, if $u$ is not a scattered subword of $v$ then by the same argument, $v$ has terms of the
form $|z| G + i L + j S + f_{z, \pi}^v(\underline{x})$ where $i+j = |z| \leq |v| = |u|$ and the only term attaining $i = |u|$ is $-\infty$; in this case all
terms in $\sigma(v)_{1,|u|+1}$ are strictly less than the dominant term of $\sigma(w)_{1,|u|+1}$, so again $\sigma(v) \neq \sigma(w)$.

Next we claim that, again provided $L$ and $G$ are sufficiently large, each of the matrices $A_1, \ldots, A_m$ satisfy all the conditions of Lemma~\ref{suffinequal}. Let $A$ be one of these matrices. Indeed, the inequalities of
type (A) state that a sum of $n$ terms exceeds a sum of $n-1$ terms, where each term lies between $G$ and $G+L$; these must clearly
hold provided $G$ was chosen sufficiently large relative to $L$. It is clear from the constructions that (B) and (C) are satisfied.  Finally, to
see that the matrices also satisfy (D), it follows from the construction that
$(A_{i,j}+A_{i+1,j+1}) - (A_{i,j+1} + A_{i+1,j})$ is always one of $S-a$ where $a \leq S$ or $L-b$ where $b \leq L$.

Thus, by Lemma~\ref{suffinequal}, the matrices $A_1, \dots, A_m$ lie in the image of $\rho$. Since they falsify the identity $w=v$, it follows that this identity cannot hold in $\mathbb{P}_n$, as required to complete the proof.
\end{proof}

Combining Corollary \ref{cor_identities} and Theorem \ref{subvariety}, we see that the variety generated by $\mathbb{P}_n$ is bounded
below by the variety generated by $UT_n(\mathbb{T})$, and above by the variety generated by $UT_d(\mathbb{T})$ where $d=\lfloor \frac{n^2}{4}+1 \rfloor$. 
In the case $n=3$ we have $d = 3$ so these bounds coincide, giving an exact result which answers a question of Izhakian
\cite[Problem 8.3]{Izhakian19}.
\begin{corollary}\label{cor_rank3varieties}
The variety generated by $\mathbb{P}_3$ coincides with the variety generated by  
$UT_3(\mathbb{T})$.
\end{corollary}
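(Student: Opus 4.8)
The plan is to derive this as a direct consequence of the two main results already established, namely Corollary~\ref{cor_identities} and Theorem~\ref{subvariety}, together with the elementary fact that two varieties of semigroups coincide exactly when they satisfy the same set of identities (so that $\mathrm{Var}(S) \subseteq \mathrm{Var}(T)$ precisely when every identity holding in $T$ also holds in $S$). Thus it suffices to show that $\plac{3}$ and $UT_3(\mathbb{T})$ satisfy precisely the same semigroup identities.

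First I would record the arithmetic observation that makes rank $3$ special: for $n = 3$ the bound appearing in Corollary~\ref{cor_identities} is $d = \lfloor \tfrac{n^2}{4} + 1 \rfloor = \lfloor \tfrac{13}{4} \rfloor = 3$, so that $d = n$. Feeding $n = 3$ into Corollary~\ref{cor_identities} then gives that $\plac{3}$ satisfies every semigroup identity satisfied by $UT_3(\mathbb{T})$; equivalently $\mathrm{Var}(\plac{3}) \subseteq \mathrm{Var}(UT_3(\mathbb{T}))$. Conversely, applying Theorem~\ref{subvariety} with $n = 3$ shows that every identity satisfied in $\plac{3}$ is satisfied in $UT_3(\mathbb{T})$, which is exactly the reverse inclusion $\mathrm{Var}(UT_3(\mathbb{T})) \subseteq \mathrm{Var}(\plac{3})$. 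Combining the two inclusions yields equality of the two varieties, which is the assertion.

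I do not anticipate any genuine obstacle here: all of the substantive work is already contained in Corollary~\ref{cor_identities} (which rests on Theorem~\ref{thm_main} and the chain-length results of \cite{Daviaud18}) and in Theorem~\ref{subvariety} (which rests on Lemma~\ref{suffinequal} and the polynomial machinery of \cite{Daviaud18,JohnsonFenner}). The only point worth stating explicitly is the coincidence $d = n$ at $n = 3$, since it is precisely this numerical accident that collapses the general sandwich $\mathrm{Var}(UT_n(\mathbb{T})) \subseteq \mathrm{Var}(\plac{n}) \subseteq \mathrm{Var}(UT_d(\mathbb{T}))$ to an equality in this case. For $n \geq 4$ one has $d > n$ and the same two results only pin the variety of $\plac{n}$ down to lie between those of $UT_n(\mathbb{T})$ and $UT_d(\mathbb{T})$, so no analogous conclusion follows.
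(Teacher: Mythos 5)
Your proposal is correct and is essentially identical to the paper's own argument: the paper likewise combines Corollary~\ref{cor_identities} and Theorem~\ref{subvariety}, noting that for $n=3$ the bound $d=\lfloor \frac{n^2}{4}+1\rfloor$ equals $3$, so the two inclusions collapse to an equality of varieties.
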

The variety generated by $UT_3(\mathbb{T})$ is quite well-understood; for example there is a polynomial time algorithm to decide if
a given identity holds \cite{Daviaud18}, and it is known that the shortest identities satisfied have length 22 \cite{JT19}.

\begin{remark}\label{remark_rank2}
The case $n=2$ gives $d=2$ so it follows similarly that the variety generated by $\mathbb{P}_2$ coincides with that generated by $UT_2(\trop)$. 
This can also be deduced from known results. A theorem of Daviaud and the authors \cite[Theorem 4.1]{Daviaud18} is that the variety generated by $UT_2(\trop)$ coincides with that generated by the bicyclic monoid. Now $\mathbb{P}_2$ is easily seen to admit the bicyclic monoid as a quotient (since its defining relations are all consequences of the relation $12 = \epsilon$ which defines a bicyclic monoid), and so the variety it generates contains the bicyclic variety. Conversely, the work of Izhakian \cite{Izhakian19} and of Cain \textit{et al} \cite{Cain17} gives faithful representations of $\plac{2}$ inside $UT_2(\trop)$, yielding the reverse inclusion.
\end{remark}

\begin{remark}\label{remark_rank1}
In fact the same argument applies even in the rank $1$ case, but it is easily seen that $\plac{1}$ (which is a free commutative monoid of rank $1$) and $UT_1(\trop)$ (which is isomorphic, via exponentiation extended to $-\infty$ in the obvious way, to the non-negative real numbers under multiplication) both generate the variety of all commutative semigroups.
\end{remark}

When $n=4$ we have $d = 5$, so the variety of $\plac{4}$ lies somewhere between those generated by $UT_4(\trop)$ and $UT_5(\trop)$.
It is natural to ask if it is actually equal to one or other of these:
\begin{question}
Is the variety generated by $\mathbb{P}_4$ equal to that generated by $UT_4(\trop)$ and/or that generated by $UT_5(\trop)$?
\end{question}
We do not know for certain that the varieties generated by $UT_4(\trop)$ and by $UT_5(\trop)$ are actually different, but we would
be surprised if they coincide; see Conjecture~\ref{conj_successiveut} above.

\begin{remark}
In general, the gap between $n$ and $d$, and hence between our upper and lower bounds, is rather large. Of course, varieties of semigroups not being linearly ordered, there is no reason to suppose that either bound can be improved: they could both be sharp. However, it is notable that our lower bound proof considers only the (non-faithful) representation $\rho$ of $\plac{n}$, and hence establishes that
$UT_n(\trop)$ is contained not only in the variety generated by $\plac{n}$, but in the (perhaps smaller) variety generated by the image
of $\rho$. One might therefore ask whether this bound can be improved by considering more of our new faithful representation $\rho_n$.

This may be possible, but it is unlikely to be straightforward. The representation $\rho$ has the nice property that the diagonal entries are independent, in the sense that given any $n$-tuple of non-negative integers there exist elements of the image of $\rho$ with these values on the diagonal. The proof of Theorem~\ref{subvariety} makes essential use this fact, constructing matrices which falsify a given identity in large part because specific values lie on their diagonals.  Other blocks of our faithful representation do not have this property, but rather have
dependence relations between the diagonal entries. For example, in the case $n=4$, for $\{a,b\} \subset \{1, 2,3,4\}$ the diagonal entry
$\rho_n(T)_{\lbrace a, b \rbrace, \lbrace a, b \rbrace}$ counts the total number of $a$'s and $b$'s in the tableau $T$, giving rise to
obvious dependence relations such as:
$$ \rho_n(T)_{\lbrace 1, 2 \rbrace , \lbrace 1, 2 \rbrace} + 
\rho_n(T)_{\lbrace 3, 4 \rbrace , \lbrace 3, 4 \rbrace} \ = \ 
\rho_n(T)_{\lbrace 1, 3 \rbrace , \lbrace 1, 3 \rbrace} +
\rho_n(T)_{\lbrace 2, 4 \rbrace , \lbrace 2, 4 \rbrace}.$$
\end{remark}

\end{document}